\providecommand{\U}[1]{\protect\rule{.1in}{.1in}}
\newtheorem{theorem}{Theorem}
\newtheorem{proposition}[theorem]{Proposition}
\newtheorem{remark}[theorem]{Remark}
\newenvironment{proof}[1][Proof]{\noindent\textbf{#1.} }{\ \rule{0.5em}{0.5em}}
\begin{document}

\title{Zero distribution of polynomials satisfying a differential-difference equation}
\author{Diego Dominici\thanks{Supported by a Humboldt Research Fellowship for
experienced researchers of the Alexander von Humboldt Foundation}\quad\ and
\quad Walter Van Assche\thanks{Supported by KU Leuven Research Project
OT/12/073, FWO Project G.0934.13 and the Belgian Interuniversity Attraction
Poles Programme P7/18.}\\{\small State University of New York at New Paltz, USA }\\{\small KU Leuven, Belgium}}

\maketitle

\begin{quotation}
Dedicated to Frank Olver, who showed us all the asymptotic path.
\end{quotation}

\begin{abstract}
In this paper we investigate the asymptotic distribution of the zeros of
polynomials $P_{n}(x)$ satisfying a first order differential-difference
equation. We give several examples of orthogonal and non-orthogonal families.

\end{abstract}

MSC-class: 34E05 (Primary) 11B83, 33C45, 44A15 (Secondary)

Keywords: Differential-difference equations, polynomial sequences, Stieltjes transform, zero counting measure

\section{Introduction}

Many families of polynomials $P_{n}(x)$ satisfy differential-difference
equations of the form
\begin{equation}
P_{n+1}(x)=A_{n}(x)P_{n}^{\prime}(x)+B_{n}(x)P_{n}(x),\qquad n\geq0,
\label{eq:diff2}%
\end{equation}
where $P_{0}(x)=1$, and $A_{n}(x)$, $B_{n}(x)$ are polynomials of degree at
most $2$ and $1$ respectively.

When $A_{n}(x)$ and $B_{n}(x)$ are independent of $n,$ we can identify
$P_{n}(x)$ with some class of \emph{derivative polynomials} \cite{MR1321452}
defined by%
\[
P_{n+1}(x)=Q(x)P_{n}^{\prime}(x)+xP_{n}(x),
\]
where the polynomial $Q(x)$ satisfy%
\[
f^{\prime}(x)=Q\left[  f(x)\right]
\]
for some function $f(x).$ These polynomials have the pseudo-Rodrigues formula%
\[
P_{n}\left[  f(x)\right]  =\frac{1}{g(x)}\frac{d^{n}}{dx^{n}}g(x),
\]
with%
\[
g(x)=\exp\left(
{\displaystyle\int\limits^{x}}
f(t)dt\right)  .
\]

Examples of derivative polynomials include the monic Hermite polynomials
$\widehat{H}_{n}(x)$, defined by \cite{MR2656096}%
\[
\widehat{H}_{n+1}(x)=-\frac{1}{2}\widehat{H}_{n}^{\prime}(x)+x\widehat{H}%
_{n}(x).
\]
In this case,%
\[
Q(x)=-\frac{1}{2},\quad f(x)=-\frac{1}{2}x,\quad g(x)=\exp\left(  -\frac
{x^{2}}{4}\right)  .
\]
We will analyze the Hermite polynomials in Section \ref{Hermite}.

Another example comes from taking
\[
Q(x)=x,\quad f(x)=e^{x},\quad g(x)=\exp\left(  e^{x}\right)  .
\]
In this case, the polynomials $P_{n}(x)$ satisfy%
\[
P_{n+1}(x)=xP_{n}^{\prime}(x)+xP_{n}(x),
\]
and are called Bell polynomials \cite{MR1503161}. We analyze these polynomials
in Section \ref{Bell}.

Now suppose that $H(x)=h^{-1}(x),$ the inverse function of $h(x)$, and that
\[
f(x)=\frac{1}{h^{\prime}(x)},\quad z_{0}=h(x_{0}),\text{ \ }\ \left\vert
f(x_{0})\right\vert \in\left(  0,\infty\right)  .
\]
Then \cite{MR2427672},$\ $
\[
\frac{d^{n}H}{dz^{n}}(z_{0})=\left[  f(x_{0})\right]  ^{n}g_{n-1}(x_{0}),\quad
n=1,2,\ldots,
\]
where the functions $g_{n}(x)$ satisfy $g_{0}(x)=1$ and
\[
g_{n+1}(x)=g_{n}^{\prime}\left(  x\right)  +\left(  n+1\right)  \frac
{f^{\prime}(x)}{f(x)}g_{n}\left(  x\right)  ,\quad n=0,1,\ldots.
\]
If $f(x)=\exp\left(  ax^{2}+bx\right)  ,$ the functions $g_{n}(x)$ are
polynomials of degree $n.$ In particular, when $a=\frac{1}{2},$ $b=0,$ we
obtain a family of polynomials associated with the derivatives of the inverse
error function. We analyze these polynomials in Section \ref{Error}.

Polynomial solutions of (\ref{eq:diff2}) arise naturally in combinatorics as
generating functions of sequences of numbers having a combinatorial
interpretation. For example, the Bell polynomials are generating functions for
the Stirling numbers of the second kind \cite{MR2172781}
\[
P_{n}(x)=%
{\displaystyle\sum\limits_{k=0}^{n}}
\left\{
\begin{array}
[c]{c}%
n\\
k
\end{array}
\right\}  x^{k},
\]
where $\left\{
\begin{array}
[c]{c}%
n\\
k
\end{array}
\right\}  $ represents the number of ways to partition a set of $n$ objects
into $k$ non-empty subsets.

The location of the zeros of the generating function $G(x)$ of a sequence
$c_{n}$ determines some of the properties of $c_{n}.$ For example, when $G(x)$
is a polynomial, we have the following result \cite{MR1110850}:

\begin{theorem}
Let%
\[
p(x)=c_{0}+c_{1}x+\cdots+c_{n}x^{n}%
\]
be a polynomial all of whose zeros are real and negative. Then, the
coefficient sequence $c_{n}$ is strictly log concave.
\end{theorem}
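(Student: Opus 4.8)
The plan is to reduce the strict log-concavity to Newton's inequalities, which I would in turn obtain by a chain of operations that preserve real-rootedness. First, factor
\[
p(x)=c_n\prod_{i=1}^{n}(x+a_i),\qquad a_i>0,
\]
so that $c_{n-j}=c_n\,e_j(a_1,\dots,a_n)$, where $e_j$ is the $j$-th elementary symmetric function of $a_1,\dots,a_n$. Consequently no coefficient $c_j$ vanishes and all of them share the sign of $c_n$; in particular $c_{k-1}c_{k+1}>0$ for $1\le k\le n-1$, and replacing $p$ by $p/c_n$ changes neither the hypothesis nor the desired inequalities $c_k^{2}>c_{k-1}c_{k+1}$. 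Hence we may assume $p$ monic.

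Now fix $k$ with $1\le k\le n-1$. I would differentiate $p$ exactly $k-1$ times. By Rolle's theorem, applied repeatedly and taking multiplicities into account, $p^{(k-1)}$ is a polynomial of degree $n-k+1$ whose zeros all lie in the convex hull of the zeros of $p$, hence are real and negative; its coefficients of $x^{0},x^{1},x^{2}$ are $(k-1)!\,c_{k-1}$, $k!\,c_k$ and $\tfrac{(k+1)!}{2}\,c_{k+1}$. Because $p^{(k-1)}$ has no zero root, the reversed polynomial $q(x)=x^{\,n-k+1}\,p^{(k-1)}(1/x)$ still has degree $n-k+1$, its zeros are the reciprocals of those of $p^{(k-1)}$ and hence are again real and negative, and its leading three coefficients are exactly the three numbers just listed (in that order, the leading one being $(k-1)!\,c_{k-1}$). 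Differentiating $q$ a further $n-k-1$ times leaves, once more by Rolle, a quadratic with real zeros,
\[
r(x)=\frac{(k-1)!\,(n-k+1)!}{2}\,c_{k-1}\,x^{2}+k!\,(n-k)!\,c_k\,x+\frac{(k+1)!\,(n-k-1)!}{2}\,c_{k+1}.
\]

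A real-rooted quadratic has nonnegative discriminant; writing that out and cancelling the factorials yields
\[
c_k^{2}\ \ge\ \frac{(k+1)(n-k+1)}{k(n-k)}\,c_{k-1}c_{k+1}\ =\ \frac{\binom{n}{k}^{2}}{\binom{n}{k-1}\binom{n}{k+1}}\,c_{k-1}c_{k+1},
\]
which is precisely Newton's inequality. Since $(k+1)(n-k+1)-k(n-k)=n+1>0$, the ratio on the right is strictly greater than $1$, and because $c_{k-1}c_{k+1}>0$ we conclude $c_k^{2}>c_{k-1}c_{k+1}$ for every $k$ with $1\le k\le n-1$, i.e. the sequence $c_0,\dots,c_n$ is strictly log-concave.

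I expect the only genuine obstacle to be the combinatorial bookkeeping: verifying that, after all the differentiations and the reversal, the coefficients of the quadratic $r$ combine so that the discriminant inequality reduces exactly to the clean factor $(k+1)(n-k+1)/(k(n-k))$; the rest is Rolle's theorem together with the remark that coefficient-reversal is legitimate here precisely because every zero of $p$ is negative, hence nonzero. It is worth stressing that the strictness does not require the zeros to be distinct: even in the extreme case $a_1=\dots=a_n$, where Newton's inequality itself degenerates to an equality, the arithmetic fact $(k+1)(n-k+1)>k(n-k)$ — equivalently, the strict log-concavity of the binomial coefficients — still forces $c_k^{2}>c_{k-1}c_{k+1}$.
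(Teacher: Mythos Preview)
Your proof is correct; this is the classical derivation of Newton's inequalities via repeated differentiation and coefficient reversal, and the bookkeeping you worry about does work out exactly as you wrote. Note, however, that the paper does \emph{not} give its own proof of this theorem: it is quoted as a known result (with reference to Stanley's survey) purely to motivate why the location of zeros of a generating polynomial is of interest. So there is no proof in the paper to compare your argument against; you have supplied a complete and standard proof where the paper simply cites the literature.
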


An extension of this result was proven by Schoenberg \cite{MR0072918}.

In this paper, we will analyze the asymptotic distribution of the zeros of
polynomials defined by the differential-difference equation (\ref{eq:diff2}).

\section{Interlacing zeros}

\strut In \cite{MR2741218}, we studied polynomial solutions of (\ref{eq:diff2}%
). Under some mild conditions on the coefficients $A_{n}(x)$, $B_{n}(x),$ we
concluded that in general the zeros of the polynomials $P_{n}(x)$ are real and
interlace. This result would be trivial if the polynomials $P_{n}(x)$ are
orthogonal but, in almost all cases, they are not.

The following theorem is crucial and of independent interest.

\begin{theorem}
\label{thm2} \label{thm1} Suppose $P_{n}$ and $P_{n+1}$ have interlacing zeros
for every $n\in\mathbb{N}$, i.e.,
\begin{equation}
x_{k,n}\leq x_{k,n-1}\leq x_{k+1,n},\qquad1\leq k\leq n-1,
\label{eq:interlace}%
\end{equation}
and that $x_{k,n}/\phi(n)\in\lbrack a,b]$ for every $1\leq k\leq n$ and
$n\in\mathbb{N}$, where $\phi(n)$ is a positive and increasing sequence. Then
there exists an infinite subset $\Lambda\subset\mathbb{N}$ such that
\begin{equation}
\lim_{n\rightarrow\infty,n\in\Lambda}\frac{\phi(n)}{n}\frac{P_{[nt]}^{\prime
}(\phi(n)x)}{P_{[nt]}(\phi(n)x)}=f_{\Lambda}(t,x) \label{limit}%
\end{equation}
for some function $f$ which is continuous on $[0,1]\times\mathbb{C}%
\setminus\lbrack a^{\ast},b^{\ast}]$, and the convergence is uniform for
$t\in\lbrack0,1]$ and $x\in\mathbb{C}\setminus\lbrack a^{\ast},b^{\ast}]$. The
points $a^{\ast},b^{\ast}$ are given by
\begin{equation}
a^{\ast}=\min\{0,a\},\quad b^{\ast}=\max\{0,b\}, \label{eq:abstar}%
\end{equation}
whenever $\phi(n)$ increases to $\infty$, and $a^{\ast}=a$, $b^{\ast}=b$
whenever $\phi$ is constant.
\end{theorem}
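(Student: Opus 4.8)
The plan is to recognize the left–hand side of (\ref{limit}) as the Stieltjes transform of a family of sub‑probability measures and then to run a Helly/Arzel\`a--Ascoli selection argument; the whole purpose of the interlacing hypothesis (\ref{eq:interlace}) will be to supply a uniform modulus of continuity in the variable $t$.

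First I would rewrite the quantity of interest. With $m=[nt]$ and $P_m^{\prime}(z)/P_m(z)=\sum_{k=1}^m (z-x_{k,m})^{-1}$,
\[
\frac{\phi(n)}{n}\,\frac{P_{[nt]}^{\prime}(\phi(n)x)}{P_{[nt]}(\phi(n)x)}
=\frac1n\sum_{k=1}^{[nt]}\frac{1}{x-x_{k,[nt]}/\phi(n)}
=\int\frac{d\nu_{n,t}(s)}{x-s}=:S_{n,t}(x),
\]
where $\nu_{n,t}=\frac1n\sum_{k=1}^{[nt]}\delta_{x_{k,[nt]}/\phi(n)}$ is a positive measure of mass $[nt]/n\le1$. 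Since $\phi$ is positive and increasing and $[nt]\le n$, each scaled zero factors as $x_{k,[nt]}/\phi(n)=\bigl(x_{k,[nt]}/\phi([nt])\bigr)\bigl(\phi([nt])/\phi(n)\bigr)$, the first factor in $[a,b]$ and the second in $(0,1]$; a short sign analysis then shows $x_{k,[nt]}/\phi(n)\in[a^{\ast},b^{\ast}]$ with $a^{\ast},b^{\ast}$ as in (\ref{eq:abstar}), reducing to $[a,b]$ when $\phi$ is constant. Hence every $S_{n,t}$ is analytic on $\mathbb{C}\setminus[a^{\ast},b^{\ast}]$, vanishes at $\infty$, and satisfies $|S_{n,t}(x)|\le 1/\operatorname{dist}(x,[a^{\ast},b^{\ast}])$.

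Next comes the interlacing step. For the zero–counting function $N_m(c)=\#\{k:x_{k,m}\le c\}$, the inequalities $x_{k,m}\le x_{k,m-1}\le x_{k+1,m}$ force $N_{m-1}(c)\le N_m(c)\le N_{m-1}(c)+1$ for every $c$, hence $0\le N_{m_2}(c)-N_{m_1}(c)\le m_2-m_1$ for $m_1\le m_2$. Therefore the distribution function $F_{n,t}(y)=\nu_{n,t}\bigl((-\infty,y]\bigr)=\frac1n N_{[nt]}(\phi(n)y)$ is nondecreasing in $y$, nondecreasing in $t$, bounded by $1$, and obeys $0\le F_{n,t_2}(y)-F_{n,t_1}(y)\le (t_2-t_1)+\frac1n$ for $t_1\le t_2$. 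In the L\'evy (Kolmogorov) metric this says that $t\mapsto\nu_{n,t}$ is equicontinuous up to the $\frac1n$ slack, with values in the weak‑$\ast$ compact metrizable set of positive measures of mass $\le1$ on $[a^{\ast},b^{\ast}]$. An Arzel\`a--Ascoli argument (concretely: a diagonal extraction over a countable dense subset of $[0,1]\times[a^{\ast},b^{\ast}]$ together with the classical Helly monotonicity argument) then produces an infinite $\Lambda\subset\mathbb{N}$ and weak‑$\ast$ limits $\nu_t$ such that $\nu_{n,t}\to\nu_t$ for every $t\in[0,1]$, uniformly in $t$; and since $s\mapsto1/(x-s)$ is continuous on $[a^{\ast},b^{\ast}]$ for $x\notin[a^{\ast},b^{\ast}]$, this yields $S_{n,t}(x)\to f_{\Lambda}(t,x):=\int d\nu_t(s)/(x-s)$ for all such $t,x$, uniformly in $t$ for each fixed $x$.

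It remains to upgrade this to the uniform convergence and regularity claimed. On any compact $K\subset\mathbb{C}\setminus[a^{\ast},b^{\ast}]$ one has, from $|S_{n,t}^{\prime}(x)|\le\operatorname{dist}(x,[a^{\ast},b^{\ast}])^{-2}$ and an integration by parts using the bound on $F_{n,t_2}-F_{n,t_1}$, the estimates $|S_{n,t}(x)-S_{n,t}(x')|\le C_K|x-x'|$ and $|S_{n,t_2}(x)-S_{n,t_1}(x)|\le C_K\bigl((t_2-t_1)+\frac1n\bigr)$, so $\{S_{n,t}\}_{n\in\Lambda}$ is uniformly bounded and asymptotically equicontinuous on $[0,1]\times K$; with the pointwise convergence already obtained, the convergence is uniform on $[0,1]\times K$, and since both $S_{n,t}$ and $f_{\Lambda}$ are bounded by $1/\operatorname{dist}(x,[a^{\ast},b^{\ast}])$ it is in fact uniform on $[0,1]\times\{x:\operatorname{dist}(x,[a^{\ast},b^{\ast}])\ge\varepsilon\}$ for every $\varepsilon>0$. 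Letting $n\to\infty$ in the Lipschitz‑in‑$t$ estimate shows $f_{\Lambda}$ is Lipschitz in $t$, and being a locally uniform limit of functions analytic in $x$ it is analytic in $x$ for each fixed $t$; hence $f_{\Lambda}$ is continuous on $[0,1]\times(\mathbb{C}\setminus[a^{\ast},b^{\ast}])$, as required. I expect the only genuine obstacle to be the interlacing step — squeezing the uniform‑in‑$t$ bound $N_{m_2}(c)-N_{m_1}(c)\le m_2-m_1$ out of (\ref{eq:interlace}), and keeping careful track of the mismatch between the degree $[nt]$ and the scaling $\phi(n)$ so that every scaled zero genuinely lies in the fixed interval $[a^{\ast},b^{\ast}]$; once that is in hand, the selection of $\Lambda$ and the Arzel\`a--Ascoli upgrade are routine.
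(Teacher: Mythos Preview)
Your argument is correct and reaches the same conclusion, but it is organized differently from the paper's proof. The paper bounds the consecutive difference
\[
\phi(N)\Bigl(\tfrac{P_m'(\phi(N)x)}{P_m(\phi(N)x)}-\tfrac{P_{m-1}'(\phi(N)x)}{P_{m-1}(\phi(N)x)}\Bigr)
\]
directly, using interlacing in the form $0\le x_{k,m-1}-x_{k,m}\le x_{k+1,m}-x_{k,m}$ so that $\sum_k(x_{k,m-1}-x_{k,m})$ telescopes to $x_{m,m}-x_{1,m}\le\phi(N)(b^*-a^*)$; summing in $m$ gives $|f_n(t,x)-f_n(s,x)|\le C\,([nt]-[ns])/n$, and the subsequence is then extracted via the Arzel\`a--Ascoli theorem in the Skorohod space $D[0,1]$. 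You instead pass through the zero-counting function, deducing from interlacing that $N_{m-1}(c)\le N_m(c)\le N_{m-1}(c)+1$ and hence $0\le F_{n,t_2}-F_{n,t_1}\le (t_2-t_1)+1/n$; the equicontinuity of $t\mapsto\nu_{n,t}$ in the L\'evy metric is then fed into Helly selection and the standard Arzel\`a--Ascoli theorem, and the Lipschitz-in-$t$ bound on the Stieltjes transforms is recovered by the integration-by-parts you indicate (note the boundary term $F_{n,t_2}(b^*)-F_{n,t_1}(b^*)=([nt_2]-[nt_1])/n$, which is harmless). The paper's telescoping trick is slightly more hands-on but requires the Skorohod machinery to cope with the jumps of $t\mapsto f_n(t,x)$; your measure-theoretic route uses only classical tools and makes the role of interlacing---monotonicity of $m\mapsto N_m(c)$ with unit increments---especially transparent.
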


\begin{proof}
The partial fraction decomposition
\begin{equation}
\frac{P_{m}^{\prime}(x)}{P_{m}(x)}=\sum_{k=1}^{m}\frac{1}{x-x_{k,m}}
\label{Partial}%
\end{equation}
readily gives
\begin{align*}
\lefteqn{\phi(N)\left(  \frac{P^{\prime}_{m}(\phi(N)x)}{P_{m}(\phi(N)x)}%
-\frac{P^{\prime}_{m-1}(\phi(N)x)}{P_{m-1}(\phi(N)x)}\right)  }\\
&  =\sum_{k=1}^{m}\frac{1}{x-x_{k,m}/\phi(N)}-\sum_{k=1}^{m-1}\frac
{1}{x-x_{k,m-1}/\phi(N)}\\
&  =\frac{1}{x-x_{m,m}/\phi(N)}+\frac{1}{\phi(N)}\sum_{k=1}^{m-1}%
\frac{x_{k,m-1}-x_{k,m}}{(x-x_{k,m}/\phi(N))(x-x_{k,m-1}/\phi(N))}.
\end{align*}
If $N\geq m$ then
\[
\frac{x_{k,m}}{\phi(N)}=\frac{x_{k,m}}{\phi(m)}\frac{\phi(m)}{\phi(N)}%
\in\left[  \frac{\phi(m)}{\phi(N)}a,\frac{\phi(m)}{\phi(N)}b\right]  .
\]
Since $\phi$ is increasing, we have $\phi(m)/\phi(N)\leq1$, and if
$\phi(n)\rightarrow\infty$, then
\[
\lim_{N\rightarrow\infty}\phi(m)/\phi(N)=0
\]
for every fixed $m$. This means that $x_{k,m}/\phi(N)\in\lbrack a^{\ast
},b^{\ast}]$ for every $m\leq N$. Let $K$ be a compact set in $\mathbb{C}%
\setminus\lbrack a^{\ast},b^{\ast}]$ and let $\delta$ be the distance from $K$
to $[a^{\ast},b^{\ast}]$, then $\delta>0$ and
\[
|x-x_{k,m}/\phi(N)|\geq\delta,\qquad x\in K.
\]
Hence
\[
\phi(N)\left\vert \frac{P_{m}^{\prime}(\phi(N)x)}{P_{m}(\phi(N)x)}%
-\frac{P_{m-1}^{\prime}(\phi(N)x)}{P_{m-1}(\phi(N)x)}\right\vert \leq\frac
{1}{\delta}+\frac{1}{\delta^{2}}\sum_{k=1}^{m-1}\frac{x_{k,m-1}-x_{k,m}}%
{\Phi(N)}.
\]
The interlacing (\ref{eq:interlace}) implies that $x_{k,m-1}-x_{k,m}\geq0$,
and furthermore $x_{k,m-1}-x_{k,m}\geq x_{k+1,m}-x_{k,m}$, so that
\[
\sum_{k=1}^{m-1}\frac{x_{k,m-1}-x_{k,m}}{\Phi(N)}\leq\sum_{k=1}^{m-1}%
\frac{x_{k+1,m}-x_{k,m}}{\Phi(N)}=\frac{x_{m,m}-x_{1,m}}{\phi(N)}\leq b^{\ast
}-a^{\ast}%
\]
whenever $m\leq N$. This gives the bound
\[
\phi(N)\left\vert \frac{P_{m}^{\prime}(\phi(N)x)}{P_{m}(\phi(N)x)}%
-\frac{P_{m-1}^{\prime}(\phi(N)x)}{P_{m-1}(\phi(N)x)}\right\vert \leq\frac
{1}{\delta}+\frac{b^{\ast}-a^{\ast}}{\delta^{2}}%
\]
which holds for every $x\in K$ and every $m\leq N$. From this one easily
finds
\begin{align*}
\phi(N)\left\vert \frac{P_{m}^{\prime}(\phi(N)x)}{P_{m}(\phi(N)x)}%
-\frac{P_{\ell}^{\prime}(\phi(N)x)}{P_{\ell}(\phi(N)x)}\right\vert  &
\leq\phi(N)\sum_{j=\ell+1}^{m}\left\vert \frac{P_{j}^{\prime}(\phi(N)x)}%
{P_{j}(\phi(N)x)}-\frac{P_{j-1}^{\prime}(\phi(N)x)}{P_{j-1}(\phi
(N)x)}\right\vert \\
&  \leq\left(  \frac{1}{\delta}+\frac{b^{\ast}-a^{\ast}}{\delta^{2}}\right)
(m-\ell)
\end{align*}
whenever $\ell\leq m\leq N$. Now take $m=[nt],\ell=[ns]$, where $0\leq s\leq
t\leq1$, then
\[
\frac{\phi(N)}{n}\left\vert \frac{P_{[nt]}^{\prime}(\phi(N)x)}{P_{[nt]}%
(\phi(N)x)}-\frac{P_{[ns]}^{\prime}(\phi(N)x)}{P_{[ns]}(\phi(N)x)}\right\vert
\leq C\frac{[nt]-[ns]}{n}%
\]
holds for every $N\geq n$ and
\[
C=\frac{1}{\delta}+\frac{b^{\ast}-a^{\ast}}{\delta^{2}}.
\]
In particular we have for $N=n$ and
\[
f_{n}(t,x)=\frac{\phi(n)}{n}\frac{P_{[nt]}^{\prime}(\phi(n)x)}{P_{[nt]}%
(\phi(n)x)}%
\]
the inequality
\begin{equation}
|f_{n}(t,x)-f_{n}(s,x)|\leq C\frac{[nt]-[ns]}{n},\qquad n\in\mathbb{N},
\label{eq:equi}%
\end{equation}
whenever $0\leq s\leq t\leq1$. Let $D[0,1]$ be the space of functions
$f:[0,1]\rightarrow\mathbb{C}$ that are right-continuous and have left-hand
limits (see \cite[Chapter 3]{MR0233396}). In $D[0,1]$ we use the Skorohod
topology and the modulus of continuity
\[
w_{f}^{\prime}(\delta)=\inf_{\{t_{i}\}}\max w_{f}([t_{i-1},t_{i})),
\]
where the infimum is over all finite sets $\{t_{0},t_{1},\ldots,t_{r}\}$ of
points in $[0,1]$ satisfying $0=t_{0}<t_{i}<\cdots<t_{r}=1$ and $t_{i}%
-t_{i-1}>\delta$ for $1\leq i\leq r$, and
\[
w_{f}([t_{i-1},t_{i}))=\sup\{|f(s)-f(t)|:\ t_{i-1}\leq s,t<t_{i}\}.
\]
Observe that (\ref{eq:equi}) implies that
\[
w_{f_{n}}^{\prime}(\delta)=%
\begin{cases}
0 & 0\leq\delta<1/n,\\
C/n & 1/n\leq\delta<2/n,\\
2C/n & 2/n\leq\delta<3/n,\\
\ \vdots & \quad\vdots
\end{cases}
\]
so that $w_{f_{n}}^{\prime}(\delta)\leq C\delta$. The analogue of the
Arzel\`{a}-Ascoli theorem in $D[0,1]$ \cite[Thm.~14.3]{MR0233396} now implies
that the set $\{f_{n},n\in\mathbb{N}\}$ of functions in $D[0,1]$ has compact
closure, hence there exists a subsequence $(f_{n})_{n\in\Lambda}$ that
converges in the Skorohod topology to a function $f_{\Lambda}$ in $D[0,1]$.
The inequality (\ref{eq:equi}) implies that
\[
|f_{\Lambda}(t,x)-f_{\Lambda}(s,x)|\leq C(t-s),
\]
hence $f_{\Lambda}$ is continuous, and the convergence is in fact uniform on
$[0,1]$. Note that all inequalities hold uniformly for $x\in K$, hence
$f_{n}(t,x)$ is a normal family on $\mathbb{C}\setminus\lbrack a^{\ast
},b^{\ast}]$ for every $t$ and the convergence also holds uniformly for $x$ on
compact sets of $\mathbb{C}\setminus\lbrack a^{\ast},b^{\ast}]$.
\end{proof}

\section{Ratio asymptotics}

We will choose the positive increasing sequence $\phi(n)$ in such a way that
the limits
\begin{align}
\lim_{n\rightarrow\infty}\frac{n}{\phi^{2}(n)}A_{n}(\phi(n)x)  &
=a(x)\label{eq:Aa}\\
\lim_{n\rightarrow\infty}\frac{1}{\phi(n)}B_{n}(\phi(n)x)  &  =b(x)
\label{eq:Bb}%
\end{align}
exists. Note that $a$ and $b$ are polynomials of degree at most $2$ and $1$
respectively. Then the differential-difference equation (\ref{eq:diff2})
gives
\[
\frac{P_{[nt]+1}(\phi(n)x)}{P_{[nt]}(x)}=A_{[nt]}(\phi(n)x)\frac
{P_{[nt]}^{\prime}(\phi(n)x)}{P_{[nt]}(\phi(n)x))}+B_{[nt]}(\phi(n)x).
\]
We will assume that $\phi$ is regularly varying, i.e.,
\begin{equation}
\phi(n)=n^{\sigma}L(n),\qquad\sigma\geq0 \label{RV}%
\end{equation}
with
\[
\lim_{n\rightarrow\infty}\frac{L([nt])}{L(n)}=1,
\]
then Theorem~\ref{thm1} and (\ref{eq:Aa})--(\ref{eq:Bb}) imply that for the
subset $\Lambda\subset\mathbb{N}$ we have
\begin{equation}
\lim_{n\rightarrow\infty,n\in\Lambda}\frac{1}{\phi(n)}\frac{P_{[nt]+1}%
(\phi(n)x)}{P_{[nt]}(\phi(n)x)}=t^{2\sigma-1}a(xt^{-\sigma})f_{\Lambda
}(t,x)+t^{\sigma}b(xt^{-\sigma}), \label{eq:ratio}%
\end{equation}
which gives ratio asymptotics for the polynomials for the same subsequence
where Theorem~\ref{thm1} gave asymptotics.

\section{How to determine the zero distribution}

\begin{theorem}
\label{thm:main} Suppose that the polynomials $P_{n}$ satisfy the
differential-difference equation (\ref{eq:diff2}) and that
\begin{align}
\lim_{n\rightarrow\infty}\frac{n}{\phi^{2}(n)}A_{n}(\phi(n)x)  &
=a(x)\label{eq:Aa2}\\
\lim_{n\rightarrow\infty}\frac{1}{\phi(n)}B_{n}(\phi(n)x)  &  =b(x)
\label{eq:Bb2}%
\end{align}
where $\phi(n)=n^{\sigma}L(n)$ is regularly varying. Then the limit function
$f\left(  s,x\right)  $ in (\ref{limit}) is independent of the subsequence
$\Lambda$ and satisfies
\begin{equation}
f(s,x)=%
{\displaystyle\int\limits_{0}^{s}}
\frac{\frac{d}{dx}\left[  t^{2\sigma-1}a(xt^{-\sigma})f(t,x)+t^{\sigma
}b(xt^{-\sigma})\right]  }{t^{2\sigma-1}a(xt^{-\sigma})f(t,x)+t^{\sigma
}b(xt^{-\sigma})}\,dt, \label{eq:f(s,x)}%
\end{equation}
with $f(0,x)=0$ and
\begin{equation}
\underset{x\rightarrow\infty}{\lim}xf(s,x)\rightarrow s. \label{largex}%
\end{equation}

\end{theorem}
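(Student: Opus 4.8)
The plan is to express the quantity $f_{n}(t,x)$ defined in the proof of Theorem~\ref{thm1} as a Riemann sum and then evaluate it through the ratio asymptotics (\ref{eq:ratio}). Since $P_{0}\equiv1$, the partial fraction identity (\ref{Partial}) and a telescoping sum give
\[ f_{n}(t,x)=\frac{1}{n}\sum_{j=1}^{[nt]}\frac{d}{dx}\log\frac{P_{j}(\phi(n)x)}{P_{j-1}(\phi(n)x)}, \]
a Riemann sum over a partition of $[0,t]$ with mesh $1/n$. Along the subsequence $\Lambda$ furnished by Theorem~\ref{thm1}, writing $j=[n\tau]$ and using the regular variation of $\phi$, the ratio asymptotics (\ref{eq:ratio}) gives
\[ \frac{1}{\phi(n)}\frac{P_{j}(\phi(n)x)}{P_{j-1}(\phi(n)x)}\ \longrightarrow\ R_{\Lambda}(\tau,x):=\tau^{2\sigma-1}a(x\tau^{-\sigma})f_{\Lambda}(\tau,x)+\tau^{\sigma}b(x\tau^{-\sigma}) \]
locally uniformly on $[0,1]\times(\mathbb{C}\setminus[a^{\ast},b^{\ast}])$, so the $j$-th summand above should converge to $\partial_{x}\log R_{\Lambda}(\tau,x)$.

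To make this rigorous, first note that $x\mapsto P_{j}(\phi(n)x)/P_{j-1}(\phi(n)x)$ is holomorphic and zero-free on $\mathbb{C}\setminus[a^{\ast},b^{\ast}]$, since the zeros of $x\mapsto P_{j}(\phi(n)x)$ and of $x\mapsto P_{j-1}(\phi(n)x)$ are the rescaled points $x_{k,j}/\phi(n),\,x_{k,j-1}/\phi(n)\in[a^{\ast},b^{\ast}]$; consequently $R_{\Lambda}(\tau,\cdot)$ — a locally uniform limit of such functions, and not identically zero since it inherits the linear growth at infinity — is zero-free on the connected set $\mathbb{C}\setminus[a^{\ast},b^{\ast}]$ by Hurwitz's theorem, and by the Weierstrass convergence theorem the $j$-th summand converges to $\partial_{x}\log R_{\Lambda}(\tau,x)$ locally uniformly in $x$. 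Moreover that summand equals $\phi(n)\bigl(P_{j}^{\prime}(\phi(n)x)/P_{j}(\phi(n)x)-P_{j-1}^{\prime}(\phi(n)x)/P_{j-1}(\phi(n)x)\bigr)$, which by the estimate established in the proof of Theorem~\ref{thm1} is bounded in modulus by a constant $C$ uniformly in $j\le n$ and in $x$ on a fixed compact set; hence $\partial_{x}\log R_{\Lambda}(\cdot,x)$ is bounded on $[0,1]$ and, by dominated convergence applied to the associated step functions, the Riemann sum converges to $\int_{0}^{s}\partial_{x}\log R_{\Lambda}(t,x)\,dt$, which is exactly (\ref{eq:f(s,x)}) for $f_{\Lambda}$. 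The value $f(0,x)=0$ is immediate, and (\ref{largex}) follows straight from the partial fraction form: since $xf_{n}(t,x)=\frac{1}{n}\sum_{k=1}^{[nt]}x/(x-x_{k,[nt]}/\phi(n))$ and every $x_{k,[nt]}/\phi(n)$ lies in $[a^{\ast},b^{\ast}]$, one has $|xf_{n}(t,x)-[nt]/n|\le\frac{[nt]}{n}\frac{M}{|x|-M}$ for $|x|>M:=\max(|a^{\ast}|,|b^{\ast}|)$, uniformly in $n$; letting $n\to\infty$ along $\Lambda$ and then $x\to\infty$ gives $xf(s,x)\to s$.

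It remains to remove the dependence on $\Lambda$. By normality, every infinite subset of $\mathbb{N}$ contains a further subsequence along which $(f_{n})$ converges, and by the above any such limit satisfies (\ref{eq:f(s,x)}) together with $f(0,\cdot)=0$ and (\ref{largex}); so it suffices to show that this system has at most one solution, whence all subsequential limits agree, $(f_{n})$ itself converges, and $f$ is independent of $\Lambda$. Differentiating (\ref{eq:f(s,x)}) in $s$ turns it into a first-order quasilinear equation for $f$, linear in $\partial_{x}f$, which may be integrated along the characteristics issuing from $s=0$, with the normalization (\ref{largex}) selecting the correct branch; equivalently, expanding $f(s,x)=\sum_{k\ge1}c_{k}(s)x^{-k}$ at $x=\infty$, (\ref{largex}) fixes $c_{1}(s)=s$ and (\ref{eq:f(s,x)}) then determines $c_{2}(s),c_{3}(s),\dots$ recursively, so the analytic solution is unique. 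I expect this uniqueness step to be the main obstacle: the coefficients $t^{2\sigma-1}a(xt^{-\sigma})$ and $t^{\sigma}b(xt^{-\sigma})$ degenerate as $t\to0^{+}$, so the usual Lipschitz/Gr\"onwall estimates need to be reinforced by the behaviour at infinity in order to control the solution near $s=0$.
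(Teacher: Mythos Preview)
Your derivation of the integral equation is essentially the paper's: telescope $P_{n}'/P_{n}$ into a sum of logarithmic derivatives of consecutive ratios, rewrite as a Riemann sum/integral, and pass to the limit along $\Lambda$ using the ratio asymptotics (\ref{eq:ratio}). You are more explicit than the paper about why the limit passage is legitimate (Hurwitz for zero-freeness of $R_{\Lambda}$, Weierstrass for the derivative, and the uniform bound from Theorem~\ref{thm1} as a dominating function), and you supply a clean direct proof of (\ref{largex}) from the partial-fraction form, which the paper simply asserts.

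The genuine divergence is in the uniqueness step. The paper does \emph{not} attack the integro-differential equation (\ref{eq:f(s,x)}) head-on. Instead it first proves a self-similarity relation (Proposition~\ref{prop:4})
\[
f(t,x)=t^{1-\sigma}f(1,xt^{-\sigma}),
\]
obtained by rescaling inside the definition of $f_{n}$ using the regular variation of $\phi$, and then (Proposition~\ref{prop:5}) shows that $S(z)=f(1,z)$ satisfies a single first-order ODE in $z$ of Riccati type when $\sigma=0$ and of Abel type when $\sigma\neq0$, with the normalization $zS(z)\to1$. Uniqueness is then uniqueness for that scalar ODE (and the paper later pins $S$ down by the power-series recurrence (\ref{req}) for its coefficients at infinity). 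Your proposal---either characteristics for the quasilinear PDE in $(s,x)$, or a direct power-series expansion $f(s,x)=\sum_{k\ge1}c_{k}(s)x^{-k}$ with $c_{1}(s)=s$ and recursive determination of the higher $c_{k}$---is a different route to the same end. It should work, because every subsequential limit is the Stieltjes transform of a probability measure on $[a^{\ast},b^{\ast}]$ and is therefore analytic at infinity, so agreement of all coefficients forces agreement of the functions; but it is heavier, since you must manage an $s$-dependent recursion and the degeneracy of the coefficients as $t\to0^{+}$ that you yourself flag. The paper's self-similarity trick eliminates the $s$-variable entirely and turns the problem into a standard ODE, which is both shorter and what makes the later explicit computations (Jacobi, Laguerre, Hermite, Bell, inverse error) possible.
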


\begin{proof}
First we use telescopic summation to write
\[
\frac{1}{n}\frac{P_{n}^{\prime}(x)}{P_{n}(x)}=\frac{1}{n}\sum_{k=0}%
^{n-1}\left(  \frac{P_{k+1}^{\prime}(x)}{P_{k+1}(x)}-\frac{P_{k}^{\prime}%
(x)}{P_{k}(x)}\right)  .
\]
Observe that
\begin{align*}
\frac{P_{k+1}^{\prime}(x)}{P_{k+1}(x)}-\frac{P_{k}^{\prime}(x)}{P_{k}(x)}  &
=\frac{P_{k+1}^{\prime}(x)P_{k}(x)-P_{k+1}(x)P_{k}^{\prime}(x)}{P_{k+1}%
(x)P_{k}(x)}\\
&  =\left(  \frac{P_{k+1}(x)}{P_{k}(x)}\right)  ^{\prime}/\left(
\frac{P_{k+1}(x)}{P_{k}(x)}\right)
\end{align*}
so that
\[
\frac{1}{n}\frac{P_{n}^{\prime}(x)}{P_{n}(x)}=\frac{1}{n}\sum_{k=0}%
^{n-1}\left(  \frac{P_{k+1}(x)}{P_{k}(x)}\right)  ^{\prime}/\left(
\frac{P_{k+1}(x)}{P_{k}(x)}\right)  .
\]
This can be written as an integral:
\[
\frac{1}{n}\frac{P_{n}^{\prime}(x)}{P_{n}(x)}=\int_{0}^{1}\left(
\frac{P_{[nt]+1}(x)}{P_{[nt]}(x)}\right)  ^{\prime}/\left(  \frac
{P_{[nt]+1}(x)}{P_{[nt]}(x)}\right)  \,dt.
\]
Now replace $x$ by $\phi(n)x$, then
\[
\frac{\phi(n)}{n}\frac{P_{n}^{\prime}(\phi(n)x)}{P_{n}(\phi(n)x)}=\phi
(n)\int_{0}^{1}\left(  \frac{P_{[nt]+1}(\phi(n)x)}{P_{[nt]}(\phi(n)x)}\right)
^{\prime}/\left(  \frac{P_{[nt]+1}(\phi(n)x)}{P_{[nt]}(\phi(n)x)}\right)
\,dt.
\]
If we now use Theorem \ref{thm1} and (\ref{eq:ratio}), then we find a
subsequence $\Lambda\subset\mathbb{N}$ such that
\[
f_{\Lambda}(1,x)=\int_{0}^{1}\frac{\frac{d}{dx}\left(  t^{2\sigma
-1}a(xt^{-\sigma})f_{\Lambda}(t,x)+t^{\sigma}b(xt^{-\sigma})\right)
}{t^{2\sigma-1}a(xt^{-\sigma})f_{\Lambda}(t,x)+t^{\sigma}b(xt^{-\sigma}%
)}\,dt.
\]
In the same way we can also find
\[
\frac{\phi(n)}{n}\frac{P_{[ns]}^{\prime}(\phi(n)x)}{P_{[ns]}(\phi(n)x)}%
=\phi(n)\int_{0}^{s}\left(  \frac{P_{[nt]+1}(\phi(n)x)}{P_{[nt]}(\phi
(n)x)}\right)  ^{\prime}/\left(  \frac{P_{[nt]+1}(\phi(n)x)}{P_{[nt]}%
(\phi(n)x)}\right)  \,dt.
\]
which for $n$ tending to infinity in $\Lambda$ gives
\[
f_{\Lambda}(s,x)=\int_{0}^{s}\frac{\frac{d}{dx}\left[  t^{2\sigma
-1}a(xt^{-\sigma})f_{\Lambda}(t,x)+t^{\sigma}b(xt^{-\sigma})\right]
}{t^{2\sigma-1}a(xt^{-\sigma})f_{\Lambda}(t,x)+t^{\sigma}b(xt^{-\sigma}%
)}\,dt,
\]
which is the integral-differential equation in (\ref{eq:f(s,x)}). Every
converging subsequence $\Lambda$ gives the same integral-differential
equation. The equation (\ref{eq:f(s,x)}) has a unique solution which satisfies
$f(0,x)=0$ and $\lim_{x\rightarrow\infty}xf(s,x)=s$, since it can be reduced
to a first order differential equation of Abel ($\sigma\neq0$) or Riccati
($\sigma=0$) type, see Propositions \ref{prop:4} and \ref{prop:5}. Hence
$f_{\Lambda}(s,x)$ is independent of the subsequence $\Lambda$.
\end{proof}

\begin{proposition}
\label{prop:4} We have%
\begin{equation}
f(t,x)=t^{1-\sigma}f(1,xt^{-\sigma}). \label{f1}%
\end{equation}

\end{proposition}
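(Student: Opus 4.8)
The plan is to establish the scaling identity $f(t,x)=t^{1-\sigma}f(1,xt^{-\sigma})$ directly from the integral–differential equation (\ref{eq:f(s,x)}), exploiting the fact that the right-hand side of that equation has a homogeneity structure built into the combinations $t^{2\sigma-1}a(xt^{-\sigma})$ and $t^{\sigma}b(xt^{-\sigma})$. Concretely, I would introduce the candidate function $g(t,x):=t^{1-\sigma}f(1,xt^{-\sigma})$ and verify that $g$ satisfies the same equation (\ref{eq:f(s,x)}) together with the same normalization $g(0,x)=0$ and $\lim_{x\to\infty}xg(s,x)=s$; then uniqueness of the solution (established in Theorem~\ref{thm:main}, via the reduction to an Abel or Riccati equation in Propositions~\ref{prop:4}–\ref{prop:5}) forces $g\equiv f$.

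The first step is to record the self-similarity of the bracketed quantity appearing in the integrand. Write $\Phi(t,x):=t^{2\sigma-1}a(xt^{-\sigma})f(t,x)+t^{\sigma}b(xt^{-\sigma})$. A short computation using the definition $f(t,x)=t^{1-\sigma}f(1,xt^{-\sigma})$ (the identity we are trying to prove, used here only to motivate the substitution) suggests that, after the change of variables $t=s\tau$, $y=xs^{-\sigma}$, one gets $\Phi(s\tau,x)=s^{\sigma}\,\Phi(\tau,y)$ up to the correct power of $\tau$; the key point is that $a(xt^{-\sigma})$ and $b(xt^{-\sigma})$ only depend on $t$ and $x$ through the ratio $xt^{-\sigma}$, so rescaling $t$ by $s$ and $x$ by $s^{-\sigma}$ leaves $xt^{-\sigma}$ unchanged and merely pulls out powers of $s$. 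The second step is to perform this substitution in the integral (\ref{eq:f(s,x)}) defining $f(s,x)$: with $t=s\tau$, $dt=s\,d\tau$, the limits $0$ to $s$ become $0$ to $1$, and the $\frac{d}{dx}$ in the numerator transforms consistently because $\frac{d}{dx}$ at fixed $s$ corresponds to $s^{-\sigma}\frac{d}{dy}$ at fixed $\tau$. Collecting the powers of $s$ should yield exactly $f(s,x)=s^{1-\sigma}\,\big(\text{the integral defining } f(1,\cdot)\big)$ evaluated at $y=xs^{-\sigma}$, i.e.\ $f(s,x)=s^{1-\sigma}f(1,xs^{-\sigma})$.

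The main obstacle I anticipate is bookkeeping the powers of $t$ (equivalently $s$ and $\tau$) correctly through the differentiation in the numerator: the operator $\frac{d}{dx}$ acts on a product in which both $a(xt^{-\sigma})$ and $f(t,x)$ depend on $x$, and one must be careful that the chain-rule factor $t^{-\sigma}$ (or $s^{-\sigma}$) is accounted for in numerator and denominator so that it cancels in the ratio. An alternative, cleaner route that avoids this is to differentiate the integral–differential equation in $s$ to obtain the differential form $\partial_s f(s,x)=\big[\partial_x \Phi(s,x)\big]/\Phi(s,x)$ with $f(0,x)=0$, then check that $g(s,x)=s^{1-\sigma}f(1,xs^{-\sigma})$ solves the same first-order-in-$s$ initial value problem: compute $\partial_s g$ by the product and chain rules, substitute the relation for $\partial_s f(1,\cdot)$ (i.e.\ the equation at $t=1$), and match. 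In either version the final move is identical — invoke the uniqueness of solutions to (\ref{eq:f(s,x)}) under the stated side conditions, which is exactly what Theorem~\ref{thm:main} provides, to conclude $f(t,x)=t^{1-\sigma}f(1,xt^{-\sigma})$.
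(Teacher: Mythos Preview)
Your proposal has a genuine circularity problem. You plan to conclude $g\equiv f$ by invoking the uniqueness of solutions of the integral--differential equation (\ref{eq:f(s,x)}), and you correctly note that this uniqueness is obtained in Theorem~\ref{thm:main} \emph{via} Propositions~\ref{prop:4} and~\ref{prop:5}. But Proposition~\ref{prop:4} is precisely what you are trying to prove, so you cannot use uniqueness here without begging the question. The same issue reappears in your alternative route: checking that $g(s,x)=s^{1-\sigma}f(1,xs^{-\sigma})$ satisfies $\partial_s g=\partial_x\Phi_g/\Phi_g$ boils down (after your own computation) to the identity $(1-\sigma)S(z)-\sigma z S'(z)=\bigl(a'S+aS'+b'\bigr)/\bigl(aS+b\bigr)$ for $S=f(1,\cdot)$, which is exactly the ODE of Proposition~\ref{prop:5} --- and that proposition is itself derived from Proposition~\ref{prop:4}. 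There is also a more local version of the same trap in your first route: verifying that $g$ solves (\ref{eq:f(s,x)}) reduces, after the substitution $t=s\tau$, to the statement that $f(1,y)$ equals an integral whose integrand involves $\tau^{1-\sigma}S(y\tau^{-\sigma})$; but the integral equation you actually know for $f(1,y)$ has $f(\tau,y)$ in the integrand, and replacing $f(\tau,y)$ by $\tau^{1-\sigma}S(y\tau^{-\sigma})$ is again the content of Proposition~\ref{prop:4}.

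The paper sidesteps all of this by not using the integral equation at all. It goes back to the defining limit (\ref{limit}),
\[
f(t,x)=\lim_{n\to\infty}\frac{\phi(n)}{n}\,\frac{P_{[nt]}'(\phi(n)x)}{P_{[nt]}(\phi(n)x)},
\]
rewrites the prefactor as $\frac{[nt]}{n}\cdot\frac{\phi(n)}{\phi([nt])}\cdot\frac{\phi([nt])}{[nt]}$, and uses the regular variation $\phi([nt])\sim t^{\sigma}\phi(n)$ to recognise the remaining factor as the limit defining $f(1,xt^{-\sigma})$ along the subsequence $m=[nt]$. The prefactors contribute $t\cdot t^{-\sigma}=t^{1-\sigma}$, giving (\ref{f1}) directly. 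This argument is short, uses only (\ref{limit}) and (\ref{RV}), and is logically prior to Theorem~\ref{thm:main}'s uniqueness claim, which is how the paper avoids the circularity your approach runs into.
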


\begin{proof}
From (\ref{limit}), we have%
\[
f(t,x)=\underset{n\rightarrow\infty}{\lim}\frac{\phi\left(  n\right)  }%
{n}\frac{P_{\left[  nt\right]  }^{\prime}\left(  \phi\left(  n\right)
x\right)  }{P_{\left[  nt\right]  }\left(  \phi\left(  n\right)  x\right)  },
\]
which we can rewrite as%
\begin{equation}
f(t,x)=\underset{n\rightarrow\infty}{\lim}\frac{\left[  nt\right]  }{n}%
\frac{\phi\left(  n\right)  }{\phi\left(  \left[  nt\right]  \right)  }%
\frac{\phi\left(  \left[  nt\right]  \right)  }{\left[  nt\right]  }%
\frac{P_{\left[  nt\right]  }^{\prime}\left(  \phi\left(  n\right)  x\right)
}{P_{\left[  nt\right]  }\left(  \phi\left(  n\right)  x\right)  }.
\label{eq1}%
\end{equation}
Using (\ref{RV}), we have%
\[
\phi\left(  \left[  nt\right]  \right)  t^{-\sigma}\sim\phi\left(  n\right)
,\quad n\rightarrow\infty.
\]
Hence,%
\[
\underset{n\rightarrow\infty}{\lim}\frac{\phi\left(  \left[  nt\right]
\right)  }{\left[  nt\right]  }\frac{P_{n}^{\prime}\left(  \phi\left(  \left[
nt\right]  \right)  xt^{-\sigma}\right)  }{P_{n}\left(  \phi\left(  \left[
nt\right]  \right)  xt^{-\sigma}\right)  }=f(1,xt^{-\sigma}),
\]
and we get%
\[
\underset{n\rightarrow\infty}{\lim}\frac{\left[  nt\right]  }{n}\frac
{\phi\left(  n\right)  }{\phi\left(  \left[  nt\right]  \right)  }\frac
{\phi\left(  \left[  nt\right]  \right)  }{\left[  nt\right]  }\frac
{P_{n}^{\prime}\left(  \phi\left(  \left[  nt\right]  \right)  xt^{-\sigma
}\right)  }{P_{n}\left(  \phi\left(  \left[  nt\right]  \right)  xt^{-\sigma
}\right)  }=t^{1-\sigma}f(1,xt^{-\sigma}).
\]
But from (\ref{eq1}), we conclude that%
\[
f(t,x)=t^{1-\sigma}f(1,xt^{-\sigma}).
\]

\end{proof}

\section{Abel and Riccati differential equations$\allowbreak$}

\begin{proposition}
\label{prop:5} The function
\begin{equation}
S(x)=f(1,x) \label{S}%
\end{equation}
satisfies the nonlinear ODE%
\begin{equation}
\left(  1-\sigma\right)  S(z)-\sigma zS^{\prime}(z)=\frac{a^{\prime
}(z)S(z)+a(z)S^{\prime}(z)+b^{\prime}\left(  z\right)  }{a\left(  z\right)
S(z)+b\left(  z\right)  } \label{ODE}%
\end{equation}
with boundary condition%
\begin{equation}
\underset{z\rightarrow\infty}{\lim}zS(z)=1. \label{BC}%
\end{equation}

\end{proposition}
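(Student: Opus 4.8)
The plan is to reduce the integral-differential equation (\ref{eq:f(s,x)}) of Theorem~\ref{thm:main} to the ODE (\ref{ODE}) by differentiating it in the first variable and then setting $s=1$, using the self-similarity relation (\ref{f1}) of Proposition~\ref{prop:4} to rewrite everything in terms of $S(z)=f(1,z)$.

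Write $g(t,z)=t^{2\sigma-1}a(zt^{-\sigma})f(t,z)+t^{\sigma}b(zt^{-\sigma})$, so that (\ref{eq:f(s,x)}) reads
\[
f(s,z)=\int_{0}^{s}\frac{\partial_{z}g(t,z)}{g(t,z)}\,dt .
\]
For $z$ ranging over a compact subset of $\mathbb{C}\setminus[a^{\ast},b^{\ast}]$, each $f(t,\cdot)$ is analytic (a uniform limit of a normal family of analytic functions, by Theorem~\ref{thm1}), by (\ref{f1}) the map $t\mapsto f(t,z)$ is smooth on $(0,1]$, and $g(t,z)$ is bounded away from $0$; hence the integrand is continuous in $t$, and the fundamental theorem of calculus gives $(\partial f/\partial s)(s,z)=\partial_{z}g(s,z)/g(s,z)$. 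Specializing to $s=1$ and using $f(1,z)=S(z)$ and $(\partial f/\partial z)(1,z)=S^{\prime}(z)$, we obtain $g(1,z)=a(z)S(z)+b(z)$, and differentiating $g(t,z)$ in $z$ at fixed $t$ before putting $t=1$ gives $a^{\prime}(z)S(z)+a(z)S^{\prime}(z)+b^{\prime}(z)$; this is precisely the right-hand side of (\ref{ODE}).

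For the left-hand side I would substitute $f(s,z)=s^{1-\sigma}S(zs^{-\sigma})$ from (\ref{f1}) and differentiate in $s$, obtaining
\[
\frac{\partial f}{\partial s}(s,z)=(1-\sigma)\,s^{-\sigma}S(zs^{-\sigma})-\sigma z\,s^{-2\sigma}S^{\prime}(zs^{-\sigma}),
\]
which at $s=1$ equals $(1-\sigma)S(z)-\sigma z\,S^{\prime}(z)$. Equating the two expressions just obtained for $(\partial f/\partial s)(1,z)$ yields (\ref{ODE}), and putting $s=1$ in (\ref{largex}) yields the boundary condition (\ref{BC}), namely $\lim_{z\to\infty}zS(z)=\lim_{z\to\infty}zf(1,z)=1$. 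A short rearrangement of (\ref{ODE}) then exhibits it as an equation of Riccati type when $\sigma=0$ and of Abel type when $\sigma\neq0$, as anticipated in the proof of Theorem~\ref{thm:main} and the title of this section.

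The step I expect to be the main obstacle is making the differentiation rigorous: one needs $f$ to be differentiable in $s$ and the integrand in (\ref{eq:f(s,x)}) to be continuous in $t$, in particular that its denominator $g(t,z)$ does not vanish for $z\in\mathbb{C}\setminus[a^{\ast},b^{\ast}]$. Smoothness in $s$ comes from (\ref{f1}) together with the analyticity of $S$ and the Lipschitz-type estimate (\ref{eq:equi}); the non-vanishing of $g(t,z)$ (and of $a(z)S(z)+b(z)$) off $[a^{\ast},b^{\ast}]$ follows because, by (\ref{eq:ratio}), $g(t,z)$ is a locally uniform limit of the ratios $\phi(n)^{-1}P_{[nt]+1}(\phi(n)z)/P_{[nt]}(\phi(n)z)$, which are analytic and eventually zero-free there since the zeros of $P_{m}$ lie in $[a\phi(m),b\phi(m)]$, so by Hurwitz' theorem the limit is either identically zero or zero-free, and the former is excluded by the behavior in (\ref{largex}). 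A minor byproduct of this argument is that (\ref{ODE}) forces $S$ to be analytic wherever $a(z)S(z)+b(z)\neq0$.
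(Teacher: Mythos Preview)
Your argument is correct and follows essentially the same route as the paper: both substitute the self-similarity relation (\ref{f1}) into the integral equation (\ref{eq:f(s,x)}), differentiate in $s$, and read off the ODE, with (\ref{BC}) coming from (\ref{largex}) at $s=1$. The only cosmetic difference is that the paper keeps $s$ general after differentiating and then sets $z=xs^{-\sigma}$, whereas you evaluate directly at $s=1$; your additional Hurwitz-type justification for the non-vanishing of $g(t,z)$ and the differentiability in $s$ is a welcome bit of rigor that the paper leaves implicit.
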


\begin{proof}
Using (\ref{f1}) in (\ref{eq:f(s,x)}), we have%
\[
s^{1-\sigma}f(1,xs^{-\sigma})=\int\limits_{0}^{s}\frac{d}{dx}\ln\left[
t^{2\sigma-1}a\left(  xt^{-\sigma}\right)  t^{1-\sigma}S(xt^{-\sigma
})+t^{\sigma}b\left(  xt^{-\sigma}\right)  \right]  dt.
\]
Thus,%
\[
s^{1-\sigma}S(xs^{-\sigma})=\int\limits_{0}^{s}\frac{d}{dx}\ln\left[  a\left(
xt^{-\sigma}\right)  S(xt^{-\sigma})+b\left(  xt^{-\sigma}\right)  \right]
dt.
\]
Differentiation with respect to $s$ gives%
\[
\left(  1-\sigma\right)  s^{-\sigma}S(xs^{-\sigma})-\sigma xs^{-2\sigma
}S^{\prime}(xs^{-\sigma})=\frac{d}{dx}\ln\left[  a\left(  xs^{-\sigma}\right)
S(xs^{-\sigma})+b\left(  xs^{-\sigma}\right)  \right]
\]
or, equivalently,%
\begin{align*}
&  \left(  1-\sigma\right)  S(xs^{-\sigma})-\sigma xs^{-\sigma}S^{\prime
}(xs^{-\sigma})\\
&  =\frac{a^{\prime}(xs^{-\sigma})S(xs^{-\sigma})+a(xs^{-\sigma})S^{\prime
}(xs^{-\sigma})+b^{\prime}\left(  xs^{-\sigma}\right)  }{a\left(  xs^{-\sigma
}\right)  S(xs^{-\sigma})+b\left(  xs^{-\sigma}\right)  }.
\end{align*}
Introducing the new variable%
\[
z=xs^{-\sigma},
\]
we get%
\[
(1-\sigma)S(z)-\sigma zS^{\prime}(z)=\frac{a^{\prime}(z)S(z)+a(z)S^{\prime
}(z)+b^{\prime}\left(  z\right)  }{a\left(  z\right)  S(z)+b\left(  z\right)
}.
\]
Finally, the boundary condition (\ref{largex}) implies
\[
\underset{z\rightarrow\infty}{\lim}zS(z)=1.
\]

\end{proof}

Solving for $S^{\prime}(z)$ in (\ref{ODE}), we get%
\begin{equation}
\left[  z\sigma\left(  aS+b\right)  +a\right]  S^{\prime}=(1-\sigma)S\left(
aS+b\right)  -\left(  b^{\prime}+Sa^{\prime}\right)  . \label{abel0}%
\end{equation}

If $\sigma=0,$ then%
\begin{equation}
S^{\prime}=S^{2}+\frac{b(z)-a^{\prime}(z)}{a(z)}S-\frac{b^{\prime}(z)}{a(z)}.
\label{riccatti}%
\end{equation}
Thus, in this case $S(z)$ is the solution of a Riccati equation
\cite{MR0357936}. The substitution
\begin{equation}
u(z)=\exp\left[  -%
{\displaystyle\int\limits^{z}}
S(t)dt\right]  \label{u}%
\end{equation}
reduces (\ref{riccatti}) to a second-order linear ODE%
\[
a(z)u^{\prime\prime}+\left[  a^{\prime}(z)-b(z)\right]  u^{\prime}-b^{\prime
}(z)u=0,
\]
which can be rewritten as%
\[
\left[  a(z)u^{\prime}-b(z)u\right]  ^{\prime}=0.
\]
Thus,%
\[
a(z)u^{\prime}-b(z)u=C_{1},
\]
and therefore
\begin{equation}
u(z)=\frac{1}{h(z)}\left[  C_{1}%
{\displaystyle\int\limits^{z}}
\frac{h(t)}{a(t)}dt+C_{2}\right]  , \label{u1}%
\end{equation}
where%
\[
h(z)=\exp\left[  -%
{\displaystyle\int\limits^{z}}
\frac{b(t)}{a(t)}dt\right]  .
\]
Using (\ref{u1}) in (\ref{u}), we get%
\begin{equation}
S(z)=-\frac{d}{dz}\ln\left[  u(z)\right]  =-\frac{b(z)}{a(z)}-\frac
{h(z)}{a(z)}\left[
{\displaystyle\int\limits^{z}}
\frac{h(t)}{a(t)}dt+C\right]  ^{-1}. \label{solRicat}%
\end{equation}

Alternatively, we note that
\begin{equation}
S_{p}(z)=-\frac{b(z)}{a(z)} \label{particular}%
\end{equation}
is a particular solution of the Riccati equation (\ref{riccatti}). Thus, we
can set \cite{MR2001201}%
\begin{equation}
S(z)=S_{p}(z)+\frac{1}{y(z)} \label{subspart}%
\end{equation}
in (\ref{riccatti}) and obtain the linear equation%
\begin{equation}
y^{\prime}-\frac{a^{\prime}+b}{a}y=-1, \label{riccati1}%
\end{equation}
with solution
\begin{equation}
y(z)=-u(z)\left[
{\displaystyle\int\limits^{z}}
\frac{1}{u(t)}dt+C\right]  , \label{solpart}%
\end{equation}
where%
\[
u(z)=\exp\left[
{\displaystyle\int\limits^{z}}
\frac{a^{\prime}\left(  t\right)  +b\left(  t\right)  }{a\left(  t\right)
}dt\right]  =a\left(  z\right)  \exp\left[
{\displaystyle\int\limits^{z}}
\frac{b\left(  t\right)  }{a\left(  t\right)  }dt\right]  .
\]
Using (\ref{solpart}) in (\ref{subspart}), we recover (\ref{solRicat}).

If $\sigma\neq0,$ we have
\begin{equation}
\left[  S+g\left(  z\right)  \right]  S^{\prime}=f_{2}\left(  z\right)
S^{2}+f_{1}\left(  z\right)  S+f_{0}\left(  z\right)  \label{abel}%
\end{equation}
where%
\begin{align}
g\left(  z\right)   &  =\frac{a(z)+\sigma zb(z)}{\sigma za(z)},\quad
f_{0}\left(  z\right)  =-\frac{b^{\prime}(z)}{\sigma za(z)},\label{gf}\\
f_{1}\left(  z\right)   &  =\frac{(1-\sigma)b(z)-a^{\prime}(z)}{\sigma
za(z)},\quad f_{2}\left(  z\right)  =\frac{1-\sigma}{\sigma z}.\nonumber
\end{align}
Differential equations of the form (\ref{abel}) are called Abel equations of
the second kind \cite{MR0021170}. The substitution%
\begin{equation}
w(z)=\left[  S(z)+g(z)\right]  E(z), \label{w-y}%
\end{equation}
where%
\begin{equation}
E(z)=\exp\left[  -%
{\displaystyle\int\limits^{z}}
f_{2}\left(  t\right)  dt\right]  =z^{1-\sigma^{-1}}, \label{E}%
\end{equation}
transforms equation (\ref{abel}) to the canonical form%
\begin{equation}
w\left(  \frac{dw}{dx}-1\right)  =R(x), \label{canonical}%
\end{equation}
where%
\begin{equation}
R(z)=\frac{F_{0}\left(  z\right)  }{F_{1}\left(  z\right)  }, \label{R}%
\end{equation}
with%
\begin{align*}
F_{0}\left(  z\right)   &  =\left[  f_{0}\left(  z\right)  -f_{1}\left(
z\right)  g\left(  z\right)  +f_{2}\left(  z\right)  g^{2}\left(  z\right)
\right]  E^{2}(z),\\
F_{1}\left(  z\right)   &  =\left[  f_{1}\left(  z\right)  -2f_{2}\left(
z\right)  g\left(  z\right)  +g^{\prime}\left(  z\right)  \right]  E(z),
\end{align*}
and the new variable $x$ is defined by%
\begin{equation}
x=%
{\displaystyle\int\limits^{z}}
F_{1}\left(  t\right)  dt. \label{x-z}%
\end{equation}
General solutions of (\ref{canonical}) for different functions $R(x)$ are
given in \cite{MR2001201}.

Once again, we note that (\ref{particular}) is a particular solution of the
Abel equation (\ref{abel0}). With this in mind, we can rewrite equation
(\ref{w-y}) in the form%
\[
S(z)=S_{p}(z)-\frac{1}{\sigma z}+z^{\frac{1}{\sigma}-1}w\left(  z\right)  .
\]
We can also use the particular solution (\ref{particular}) to construct a
self-transformation of the Abel equation (\ref{abel}). Setting
\[
y(z)=\frac{1}{S(z)-S_{p}(z)},
\]
we get \cite{MR2001201}%
\[
\left(  y+\frac{1}{S_{p}+g}\right)  y^{\prime}=\frac{S_{p}^{\prime}%
-f_{1}-2f_{2}S_{p}}{S_{p}+g}y^{2}-\frac{f_{2}}{S_{p}+g}y.
\]
From (\ref{gf}) we have%

\[
S_{p}+g=\frac{a(z)+\sigma zb(z)}{\sigma za(z)}-\frac{b(z)}{a(z)}=\frac
{1}{\sigma z},
\]
and
\[
\frac{S_{p}^{\prime}-f_{1}-2f_{2}S_{p}}{S_{p}+g}=\sigma zS_{p}^{\prime
}+\left(  \sigma-1\right)  S_{p}+\frac{a^{\prime}}{a}.
\]
Thus, we obtain the equation
\begin{equation}
\left(  y+\sigma z\right)  y^{\prime}=\left[  \sigma zS_{p}^{\prime}+\left(
\sigma-1\right)  S_{p}+\frac{a^{\prime}}{a}\right]  y^{2}+\left(
\sigma-1\right)  y. \label{abel1}%
\end{equation}
When $\sigma=0,$ equation (\ref{abel1}) reduces to the Riccati equation
(\ref{riccati1}).

Since there is no method that will allow us to solve the Abel equation
(\ref{abel0}) in general, we will construct a particular solution satisfying
the asymptotic condition (\ref{BC}).

\begin{proposition}
Suppose that%
\begin{subequations}
\begin{equation}
a(z)=a_{2}z^{2}+a_{1}z+a_{0},\quad b(z)=b_{1}z+b_{0}, \label{ab}%
\end{equation}
and that%
\end{subequations}
\[
-\frac{a_{2}+b_{1}}{\sigma\left(  a_{2}+b_{1}\right)  +a_{2}}\notin%
\mathbb{N},
\]
where $\mathbb{N}$ denotes the set of natural numbers. Then, the Abel equation
(\ref{abel0}) has the unique solution%
\begin{equation}
S(z)=%
{\displaystyle\sum\limits_{n=0}^{\infty}}
\frac{c_{n}}{z^{n}}, \label{sol}%
\end{equation}
where the coefficients $c_{n}$ are defined by the recurrence relation%
\begin{gather}
\left[  a_{2}\left(  n-1\right)  +\left(  \sigma n-\sigma+1\right)  \left(
a_{2}+b_{1}\right)  \right]  c_{n}=\left[  \left(  2\sigma-1-\sigma n\right)
\left(  a_{1}+b_{0}\right)  -a_{1}\left(  n-2\right)  \right]  c_{n-1}%
\label{req}\\
-a_{0}\left(  n-2\right)  c_{n-2}-%
{\displaystyle\sum\limits_{k=0}^{n-3}}
\left(  \sigma k+1\right)  c_{k+1}\left(  a_{0}c_{n-2-k}+a_{1}c_{n-1-k}%
\right)  +a_{2}\left(  \sigma k+\sigma+1\right)  c_{k+2}c_{n-1-k},\nonumber
\end{gather}
for $n=2,3,\ldots,$ with $c_{0}=0$ and $c_{1}=1.$
\end{proposition}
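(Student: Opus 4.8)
The plan is to convert the Abel equation (\ref{abel0}) into the claimed recurrence by substituting the formal series $S(z)=\sum_{n\ge0}c_nz^{-n}$, and to show that the non-resonance hypothesis is exactly what makes that recurrence solvable. First I fix the initial data: the solution we want must obey the boundary condition (\ref{BC}), and feeding the ansatz into $\lim_{z\to\infty}zS(z)=1$ forces $c_0=0$ and $c_1=1$, which are the two initialisations in the statement. With $c_0=0$ one has $a(z)S(z)+b(z)=(a_2+b_1)z+O(1)$, so $z\sigma\bigl(a(z)S(z)+b(z)\bigr)+a(z)=\bigl[\sigma(a_2+b_1)+a_2\bigr]z^2+O(z)$ while $S'(z)=-z^{-2}+O(z^{-3})$; comparing the $z^0$ terms on the two sides of (\ref{abel0}) is then an identity, which is why the recurrence only starts at $n=2$.

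Next I substitute the full series into the cleared polynomial form $\bigl[z\sigma(aS+b)+a\bigr]S'=(1-\sigma)S(aS+b)-b'-Sa'$ of (\ref{abel0}) and expand each side as a Laurent series in $z^{-1}$. Because $c_0=0$, the products $aS$, $S(aS+b)$, $z\sigma(aS+b)S'$ and $Sa'$ are honest power series in $z^{-1}$ past a bounded order, so equating the coefficient of $z^{-(n-1)}$ for each $n\ge2$ is a finite Cauchy-convolution identity in the $c_k$ and the fixed $a_i,b_j$. The structural point is that, although the equation is quadratic in $S$, the unknown $c_n$ appears in that identity only linearly — in every product that could contain $c_n$ the other factor is $c_0=0$, or is $c_1=1$ (these are the two genuine linear contributions), or else contributes only to a strictly lower order. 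Hence the identity reads $\Delta_nc_n=(\text{a polynomial in }a_i,b_j,c_0,\dots,c_{n-1})$ with
\[
\Delta_n=a_2(n-1)+(\sigma n-\sigma+1)(a_2+b_1)=a_2n+\sigma(n-1)(a_2+b_1)+b_1,
\]
and carrying the remaining lower-order convolution terms over to the right reproduces exactly the right-hand side of (\ref{req}); the values $n=2$ and $n=3$ should be checked separately, since the sum $\sum_{k=0}^{n-3}$ in (\ref{req}) is empty there.

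The hypothesis is used exactly at this point: $\Delta_n=0$ would force $n-1=-\dfrac{a_2+b_1}{\sigma(a_2+b_1)+a_2}$, and the assumption $-\dfrac{a_2+b_1}{\sigma(a_2+b_1)+a_2}\notin\mathbb{N}$ rules this out for every integer $n\ge2$, so $\Delta_n\ne0$ throughout. Thus (\ref{req}) determines $c_2,c_3,\dots$ uniquely from $c_0=0$, $c_1=1$, the formal series of the prescribed type is unique, and any function holomorphic at $z=\infty$ that solves (\ref{abel0}) with $\lim_{z\to\infty}zS(z)=1$ must coincide with it. If one wants a bona fide analytic solution rather than a formal one, convergence of $\sum c_nz^{-n}$ near $z=\infty$ can be obtained by a Cauchy majorant argument applied to (\ref{req}) (the leading factor $\Delta_n$ grows linearly in $n$ while the right-hand side is a convolution of bounded depth), or, more softly, by passing to the coordinate $\zeta=1/z$ and invoking the standard existence theorem for holomorphic solutions of first-order analytic ODEs, the non-resonance condition being precisely the hypothesis that excludes logarithmic or branch solutions and singles out the unique holomorphic one.

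I expect the genuinely laborious step to be the second one: getting the bookkeeping of the Cauchy products exactly right — all signs, the identification of the two sources of the linear $c_n$-term, and the precise combinatorial shape of the quadratic convolution $\sum_{k=0}^{n-3}\bigl((\sigma k+1)c_{k+1}(a_0c_{n-2-k}+a_1c_{n-1-k})+a_2(\sigma k+\sigma+1)c_{k+2}c_{n-1-k}\bigr)$ appearing in (\ref{req}). Everything else — the initial data, the resonance count, and (if needed) the majorant estimate — is then short.
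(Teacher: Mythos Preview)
Your proposal is correct and follows the paper's approach: substitute the series into (\ref{abel0}), expand, and read off the recurrence by equating coefficients of each power of $1/z$. The only difference worth flagging is that you fix $c_0=0$ and $c_1=1$ from the boundary condition (\ref{BC}), whereas the paper (after passing to $x=1/z$) derives $c_0=0$ from the two lowest-order coefficient equations and obtains $c_1=1$ as one root of the quadratic $(c_1-1)(a_2c_1+b_1)=0$ at the next order --- a point the authors explicitly highlight in the Remark following the proof; your brief discussion of convergence via a majorant argument also goes slightly beyond the paper, which treats the series purely formally.
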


\begin{proof}
Setting $z=x^{-1}$ in (\ref{abel0}), we have%
\begin{equation}
-x^{2}\left[  \sigma x^{-1}\left(  aS+b\right)  +a\right]  S^{\prime
}=(1-\sigma)S\left(  aS+b\right)  -\left(  b^{\prime}+Sa^{\prime}\right)  ,
\label{abel2}%
\end{equation}
where $a,b,a^{\prime}$ and $b^{\prime}$ are evaluated at $x^{-1}.$ Using
(\ref{ab}) and (\ref{sol}) in (\ref{abel2}), we obtain
\begin{gather}
-b_{1}x^{2}+a_{0}%
{\displaystyle\sum\limits_{n=3}^{\infty}}
\left(  n-3\right)  c_{n-3}x^{n}\nonumber\\%
{\displaystyle\sum\limits_{n=2}^{\infty}}
\left[  \left(  n-2\right)  \left(  \sigma b_{0}+a_{1}\right)  -\left(  \sigma
b_{0}-b_{0}+a_{1}\right)  \right]  c_{n-2}x^{n}\nonumber\\
+%
{\displaystyle\sum\limits_{n=1}^{\infty}}
\left[  \left(  n-1\right)  \left(  \sigma b_{1}+a_{2}\right)  -\left(  \sigma
b_{1}-b_{1}+2a_{2}\right)  \right]  c_{n-1}x^{n}\nonumber\\
+a_{0}%
{\displaystyle\sum\limits_{n=2}^{\infty}}
{\displaystyle\sum\limits_{k=2}^{n}}
\left[  \sigma\left(  k-2\right)  -\left(  \sigma-1\right)  \right]
c_{k-2}c_{n-k}\ x^{n}\label{req1}\\
+a_{1}%
{\displaystyle\sum\limits_{n=1}^{\infty}}
{\displaystyle\sum\limits_{k=1}^{n}}
\left[  \sigma\left(  k-1\right)  -\left(  \sigma-1\right)  \right]
c_{k-1}c_{n-k}\ x^{n}\nonumber\\
+a_{2}%
{\displaystyle\sum\limits_{n=0}^{\infty}}
{\displaystyle\sum\limits_{k=0}^{n}}
\left[  \sigma k-\left(  \sigma-1\right)  \right]  c_{k}c_{n-k}\ x^{n}%
=0.\nonumber
\end{gather}
Comparing coefficients of $x$ we get, up to order $x,$%
\begin{align*}
a_{2}\left(  1-\sigma\right)  c_{0}^{2}  &  =0,\\
-\left(  \sigma b_{1}-b_{1}+2a_{2}\right)  c_{0}-a_{1}\left(  \sigma-1\right)
c_{0}^{2}  &  =0,
\end{align*}
and therefore $c_{0}=0.$ The next equation is%
\[
-b_{1}+\left(  b_{1}-a_{2}\right)  c_{1}+a_{2}c_{1}^{2}=0
\]
or
\[
\left(  c_{1}-1\right)  \left(  b_{1}+a_{2}c_{1}\right)  =0,
\]
and hence, $c_{1}=1.$ Using $c_{0}=0$ in (\ref{req1}), we get after some
simplification%
\begin{gather*}
\left[  \left(  n-2\right)  \left(  \sigma b_{0}+a_{1}\right)  -\left(  \sigma
b_{0}-b_{0}+a_{1}\right)  \right]  c_{n-2}\\
+\left[  \left(  n-1\right)  \left(  \sigma b_{1}+a_{2}\right)  -\left(
\sigma b_{1}-b_{1}+2a_{2}\right)  \right]  c_{n-1}\\
+a_{0}%
{\displaystyle\sum\limits_{k=3}^{n-1}}
\left[  \sigma\left(  k-3\right)  +1\right]  c_{k-2}c_{n-k}\ +a_{1}%
{\displaystyle\sum\limits_{k=2}^{n-1}}
\left[  \sigma\left(  k-2\right)  +1\right]  c_{k-1}c_{n-k}\ \\
+a_{0}\left(  n-3\right)  c_{n-3}+a_{2}%
{\displaystyle\sum\limits_{k=1}^{n-1}}
\left[  \sigma\left(  k-1\right)  +1\right]  c_{k}c_{n-k}\ =0.
\end{gather*}
Shifting $n$ to $n+1,$ we have
\begin{gather}
\left[  \left(  n-1\right)  \left(  \sigma b_{0}+a_{1}\right)  -\left(  \sigma
b_{0}-b_{0}+a_{1}\right)  \right]  c_{n-1}\nonumber\\
+\left[  n\left(  \sigma b_{1}+a_{2}\right)  -\left(  \sigma b_{1}%
-b_{1}+2a_{2}\right)  \right]  c_{n}\label{req2}\\
+a_{0}%
{\displaystyle\sum\limits_{k=3}^{n}}
\left[  \sigma\left(  k-3\right)  +1\right]  c_{k-2}c_{n+1-k}\ +a_{1}%
{\displaystyle\sum\limits_{k=2}^{n}}
\left[  \sigma\left(  k-2\right)  +1\right]  c_{k-1}c_{n+1-k}\ \nonumber\\
+a_{0}\left(  n-2\right)  c_{n-2}+a_{2}%
{\displaystyle\sum\limits_{k=1}^{n}}
\left[  \sigma\left(  k-1\right)  +1\right]  c_{k}c_{n+1-k}\ =0.\nonumber
\end{gather}
Using $c_{1}=1$ in (\ref{req2}), we obtain
\begin{gather}
\left[  \left(  n-1\right)  \left(  \sigma b_{0}+a_{1}\right)  -\left(  \sigma
b_{0}-b_{0}+a_{1}\right)  +a_{1}\left(  n\sigma-2\sigma+1\right)  \right]
c_{n-1}\nonumber\\
+\left[  n\left(  \sigma b_{1}+a_{2}\right)  -\left(  \sigma b_{1}%
-b_{1}+2a_{2}\right)  +a_{2}\left(  n\sigma-\sigma+2\allowbreak\right)
\right]  c_{n}\label{req3}\\
+a_{0}%
{\displaystyle\sum\limits_{k=3}^{n}}
\left[  \sigma\left(  k-3\right)  +1\right]  c_{k-2}c_{n+1-k}\ +a_{1}%
{\displaystyle\sum\limits_{k=2}^{n-1}}
\left[  \sigma\left(  k-2\right)  +1\right]  c_{k-1}c_{n+1-k}\ \nonumber\\
+a_{0}\left(  n-2\right)  c_{n-2}+a_{2}%
{\displaystyle\sum\limits_{k=2}^{n-1}}
\left[  \sigma\left(  k-1\right)  +1\right]  c_{k}c_{n+1-k}\ =0.\nonumber
\end{gather}
Shifting the sums in (\ref{req3}) we conclude that%
\begin{gather*}
\left[  \left(  n-1\right)  \left(  \sigma b_{0}+a_{1}\right)  -\left(  \sigma
b_{0}-b_{0}+a_{1}\right)  +a_{1}\left(  n\sigma-2\sigma+1\right)  \right]
c_{n-1}\\
+\left[  n\left(  \sigma b_{1}+a_{2}\right)  -\left(  \sigma b_{1}%
-b_{1}+2a_{2}\right)  +a_{2}\left(  n\sigma-\sigma+2\allowbreak\right)
\right]  c_{n}\\
+a_{0}%
{\displaystyle\sum\limits_{k=0}^{n-3}}
\left(  \sigma k+1\right)  c_{k+1}c_{n-2-k}\ +a_{1}%
{\displaystyle\sum\limits_{k=0}^{n-3}}
\left(  \sigma k+1\right)  c_{k+1}c_{n-1-k}\ \\
+a_{0}\left(  n-2\right)  c_{n-2}+a_{2}%
{\displaystyle\sum\limits_{k=0}^{n-3}}
\left[  \sigma\left(  k+1\right)  +1\right]  c_{k+2}c_{n-1-k}\ =0,
\end{gather*}
and the result follows.
\end{proof}

\begin{remark}
Note that in the previous proof $c_{0}$ and $c_{1}$ were \emph{obtained }as
part of the process of finding a unique solution of the differential equation
(\ref{abel0}). We didn't need to assume their values at all.
\end{remark}

\section{The Stieltjes transform}

We will now show that the function $S(z)$ that we obtained in the previous
section is the Stieltjes transform of the equilibrium measure for the
polynomials $P_{n}(x).$

From (\ref{limit}), (\ref{Partial}) and (\ref{S}), we know that%
\[
S(z)=f(1,z)=\lim_{n\rightarrow\infty}\frac{\phi(n)}{n}\frac{P_{n}^{\prime
}\left[  \phi(n)z\right]  }{P_{n}\left[  \phi(n)z\right]  }=\lim
_{n\rightarrow\infty}\frac{1}{n}\sum_{k=1}^{n}\frac{1}{z-z_{k,n}},
\]
where%
\[
z_{k,n}=\frac{x_{k,n}}{\phi(n)},
\]
and $\left\{  x_{k,n}\right\}  _{k=1}^{n}$ are the zeros of $P_{n}(x).$
Introducing the zero counting measures \cite[Section 1.2]{MR903848}%
\begin{equation}
\psi_{n}\left(  t\right)  =\left\{
\begin{array}
[c]{c}%
0,\quad t\leq z_{1,n}\\
\frac{k}{n},\quad z_{k,n}<t\leq z_{k+1,n},\quad k=1,2,\ldots,n-1\\
1,\quad t\geq z_{n,n}%
\end{array}
\right.  , \label{psin}%
\end{equation}
we have
\[
\frac{1}{n}\sum_{k=1}^{n}\frac{1}{z-z_{k,n}}=%
{\displaystyle\int\limits_{-\infty}^{\infty}}
\frac{1}{z-t}d\psi_{n}\left(  t\right)  ,
\]
and there exist a probability measure $\psi\left(  t\right)  $ such that%
\begin{equation}
S(z)=%
{\displaystyle\int\limits_{-\infty}^{\infty}}
\frac{1}{z-t}d\psi\left(  t\right)  . \label{Stieltjes}%
\end{equation}
The integral above is called the \textit{Stieltjes transform} \cite[Section
65]{MR0025596} of $\psi\left(  t\right)  $, and $\psi\left(  t\right)  $ is
called the \textit{equilibrium measure}. To recover the measure $\psi\left(
t\right)  $ from (\ref{Stieltjes}), we can use the \textit{Stieltjes-Perron
inversion formula}
\begin{equation}
\left[  \psi\right]  \left(  s\right)  -\left[  \psi\right]  \left(  t\right)
=\frac{1}{\pi}\lim_{y\rightarrow0^{+}}%
{\displaystyle\int\limits_{s}^{t}}
\operatorname{Im}\left[  S\left(  x+\mathrm{i}y\right)  dx\right]  ,
\label{Inversion}%
\end{equation}
where $\left[  \psi\right]  $ denotes the jump operator%
\[
\left[  \psi\right]  \left(  s\right)  =\frac{1}{2}\left[  \lim_{t\rightarrow
s^{-}}\psi\left(  t\right)  +\lim_{t\rightarrow s^{+}}\psi\left(  t\right)
\right]  .
\]
Note that
\[
\psi\left(  t\right)  =Au\left(  s-a\right)  +\nu\left(  t\right)  ,
\]
($u$ is Heaviside's step function) if and only if%
\[
S(z)=\frac{A}{z-a}+F(z).
\]
In particular, the absolutely continuous part of $\psi$ is given by%
\begin{equation}
\psi^{\prime}\left(  t\right)  =-\frac{1}{\pi}\lim_{y\rightarrow0^{+}%
}\operatorname{Im}S\left(  t+\mathrm{i}y\right)  . \label{ACInversion}%
\end{equation}

The function $S(z)$ has the asymptotic behavior \cite[Section 12.9]%
{MR0453984}
\[
S(z)\sim\frac{\mu_{0}}{z}+\frac{\mu_{1}}{z^{2}}+\frac{\mu_{2}}{z^{3}}%
+\cdots,\quad z\rightarrow\infty,
\]
where the coefficients $\mu_{n}$ are the \textit{moments} of the measure
$\psi\left(  t\right)  $%
\[
\mu_{n}=%
{\displaystyle\int\limits_{-\infty}^{\infty}}
t^{n}d\psi\left(  t\right)  ,\quad n=0,1,\ldots.
\]

\section{Examples}

\subsection{Jacobi polynomials}

The Jacobi polynomials $P_{n}^{(\alpha,\beta)}$ are orthogonal polynomials on
$[-1,1]$ satisfying
\[%
{\displaystyle\int\limits_{-1}^{1}}
P_{n}^{(\alpha,\beta)}(x)P_{m}^{(\alpha,\beta)}(x)(1-x)^{\alpha}(1+x)^{\beta
}\,dx=\frac{2^{\alpha+\beta+1}}{2n+\alpha+\beta+1}\frac{\Gamma(n+\alpha
+1)\Gamma(n+\beta+1)}{\Gamma(n+\alpha+\beta+1)n!}\ \delta_{m,n}.
\]
here we take $\alpha,\beta>-1$ in order that the weight is integrable on
$[-1,1]$. They satisfy the relation \cite[Eq/ (4.5.7) on p.~72]{MR0372517}
\begin{align}
(2n+\alpha+\beta+2)(1-x^{2})\frac{d}{dx}P_{n}^{(\alpha,\beta)}(x)  &
=(n+\alpha+\beta+1)[(2n+\alpha+\beta+2)x+\alpha-\beta]P_{n}^{(\alpha,\beta
)}(x)\label{eq:Jac}\\
&  -\ 2(n+1)(n+\alpha+\beta+1)P_{n+1}^{(\alpha,\beta)}(x).\nonumber
\end{align}
The monic polynomials are
\[
P_{n}(x)=\frac{2^{n}n!\Gamma(2n+\alpha+\beta)}{\Gamma(2n+\alpha/\beta
+1)}\ P_{n}^{(\alpha,\beta)}(x)
\]
and the relation (\ref{eq:Jac}) become
\begin{equation}
(2n+\alpha+\beta+1)P_{n+1}(x)=(x^{2}-1)P_{n}^{\prime}(x)+(n+\alpha
+\beta+1)\left(  x+\frac{\alpha-\beta}{2n+\alpha+\beta+2}\right)  P_{n}(x).
\label{eq:Jacmon}%
\end{equation}
This of the form (\ref{eq:diff2}) with
\[
A_{n}(x)=\frac{x^{2}-1}{2n+\alpha+\beta+1},\quad B_{n}(x)=\frac{n+\alpha
+\beta+1}{2n+\alpha+\beta+1}\left(  x+\frac{\alpha-\beta}{2n+\alpha+\beta
+2}\right)  .
\]
All the zeros of Jacobi polynomials are on $[-1,1]$ and they are interlacing.
Hence we need no scaling and can take $\phi(n)=1$ for all $n$. Clearly
$\sigma=0$ and
\begin{align}
\lim_{n\rightarrow\infty}nA_{n}(x)  &  =\frac{1}{2}(x^{2}%
-1)=a(x),\label{abJacobi}\\
\lim_{n\rightarrow\infty}B_{n}(x)  &  =\frac{x}{2}=b(x).\nonumber
\end{align}
It follows that%
\begin{equation}
h(x)=\exp\left[  -%
{\displaystyle\int\limits^{x}}
\frac{t}{t^{2}-1}dt\right]  =\frac{1}{\sqrt{x^{2}-1}}. \label{hJacobi}%
\end{equation}
Using (\ref{abJacobi}) and (\ref{hJacobi}) in (\ref{solRicat}), we get%
\[
S(x)=-\frac{x}{x^{2}-1}+\frac{1}{x^{2}-1}\frac{1}{x+C\sqrt{x^{2}-1}}.
\]
In order that $\lim_{x\rightarrow\infty}xS(x)=1$ we need to choose $C=-1$,
which gives
\[
f(1,x)=S(x)=\frac{1}{\sqrt{x^{2}-1}}.
\]
This function is analytic in $\mathbb{C}\setminus\lbrack-1,1]$ and is the
Stieltjes transform of a positive measure:
\[
\frac{1}{\sqrt{x^{2}-1}}=\frac{1}{\pi}%
{\displaystyle\int\limits_{-1}^{1}}
\frac{1}{x-t}\frac{dt}{\sqrt{1-t^{2}}},\quad x\notin\lbrack-1,1],
\]
so that the asymptotic distribution of the zeros of Jacobi polynomials is
given by
\[
\lim_{n\rightarrow\infty}\frac{1}{n}\#\{\text{zeros of }P_{n}^{(\alpha,\beta
)}\text{ in }[a,b]\}=\frac{1}{\pi}%
{\displaystyle\int\limits_{a}^{b}}
\frac{dt}{\sqrt{1-t^{2}}},\qquad\lbrack a,b]\subset\lbrack-1,1].
\]

\subsection{Laguerre polynomials}

Laguerre polynomials $L_{n}^{\alpha}$ are orthogonal polynomials on
$[0,\infty)$
\[%
{\displaystyle\int\limits_{0}^{\infty}}
L_{n}^{\alpha}(x)L_{m}^{\alpha}(x)x^{\alpha}e^{-x}\,dx=\frac{\Gamma
(n+\alpha+1)}{n!}\ \delta_{m,n},
\]
where $\alpha>-1$. The zeros are real, positive and interlacing. From
Szeg\H{o} \cite[Eq. (5.1.14)]{MR0372517} we learn that
\[
x\frac{d}{dx}L_{n}^{\alpha}(x)=nL_{n}^{\alpha}(x)-(n+\alpha)L_{n-1}^{\alpha
}(x).
\]
Combined with the recurrence relation \cite[Eq. (5.1.10)]{MR0372517}
\[
(n+1)L_{n+1}^{\alpha}(x)=(-x+2n+\alpha+1)L_{n}^{\alpha}(x)-(n+\alpha
)L_{n-1}^{\alpha}(x)
\]
this gives the relation
\[
x\frac{d}{dx}L_{n}^{\alpha}(x)=(x-n-\alpha-1)L_{n}^{\alpha}(x)+(n+1)L_{n+1}%
^{\alpha}(x).
\]
For the monic polynomials $P_{n}=(-1)^{n}n!L_{n}^{\alpha}$ we then find
\begin{equation}
P_{n+1}(x)=-xP_{n}^{\prime}(x)+(x-n-\alpha-1)P_{n}(x), \label{eq:Lag}%
\end{equation}
which is of the form (\ref{eq:diff2}) with
\[
A_{n}(x)=-x,\qquad B_{n}(x)=x-n-\alpha-1.
\]
In order that (\ref{eq:Aa2})--(\ref{eq:Bb2}) holds, we choose the scaling
$\phi(n)=n$, so that
\begin{equation}
a(x)=-x,\qquad b(x)=x-1 \label{abLaguerre}%
\end{equation}
and $\sigma=1.$ Using these in (\ref{w-y}), (\ref{E}), (\ref{R}) and
(\ref{x-z}), we have%
\begin{equation}
w(z)=S(z)-1+\frac{2}{z},\quad E(z)=1,\quad R(z)=-\frac{2}{3z},\quad x=\frac
{3}{z}. \label{wLaguerre}%
\end{equation}
Hence, $R(x)=-\frac{2}{9}x$ and the canonical form of the Abel equation
(\ref{canonical}) is%
\[
w\left(  \frac{dw}{dx}-1\right)  =-\frac{2}{9}x,
\]
and therefore%
\[
w(x)=\frac{2Cx+1\pm\sqrt{2Cx+1}}{3C}.
\]
Using (\ref{wLaguerre}), we get%
\[
S(z)=\frac{3C+1\pm\sqrt{6Cz^{-1}+1}}{3C},
\]
which gives%
\[
0=\lim_{z\rightarrow\infty}S(z)=\frac{3C+1\pm1}{3C}.
\]
Thus, the desired solution has the positive sign and $C=-\frac{2}{3}$
\[
f(1,z)=S(z)=\frac{1-\sqrt{1-4z^{-1}}}{2}=\frac{2}{z+\sqrt{z^{2}-4z}}.
\]
This is the Stieltjes transform of a positive measure on $[0,4]$
\[
\frac{1}{2\pi}%
{\displaystyle\int\limits_{0}^{4}}
\frac{\sqrt{4-t}}{\sqrt{t}}\frac{dt}{z-t}=\frac{2}{z+\sqrt{z^{2}-4z}},\quad
z\notin\lbrack0,4],
\]
so that we can conclude
\[
\lim_{n\rightarrow\infty}\frac{1}{n}\#\{\text{zeros of }L_{n}^{\alpha
}(nx)\text{ in }[a,b]\}=\frac{1}{2\pi}%
{\displaystyle\int\limits_{a}^{b}}
\frac{\sqrt{4-t}}{\sqrt{t}}\,dt,\qquad\lbrack a,b]\subset\lbrack0,4].
\]
This result is not new and can be found using general methods based on
potential theory or general results for orthogonal polynomials defined by
their recurrence relation. The approach using Theorem \ref{thm:main} is new.

\subsection{Hermite polynomials \label{Hermite}}

Hermite polynomials are orthogonal on the real line with respect to the normal
distribution:
\[%
{\displaystyle\int\limits_{-\infty}^{\infty}}
H_{n}(x)H_{m}(x)e^{-x^{2}}\,dx=2^{n}\sqrt{\pi}n\delta_{m,n}.
\]
They satisfy the following differential-difference equation \cite[Eq.
(5.5.10)]{MR0372517}
\[
H_{n+1}(x)=-H_{n}^{\prime}(x)+2xH_{n}(x).
\]
The choice $\phi(n)=\sqrt{n}$ gives a scaling such that (\ref{eq:Aa2}%
)--(\ref{eq:Bb2}) result in
\begin{equation}
a(x)=-1,\quad b(x)=2x. \label{abHermite}%
\end{equation}
Using (\ref{abHermite}) and $\sigma=1/2$ in (\ref{w-y}), (\ref{E}), (\ref{R})
and (\ref{x-z}), we have%
\begin{equation}
w(z)=\frac{1}{z}S(z)-2+\frac{2}{z^{2}},\quad E(z)=\frac{1}{z},\quad
R(z)=-\frac{2}{3z^{2}},\quad x=\frac{3}{z^{2}}. \label{wHermite}%
\end{equation}
Hence, $R(x)=-\frac{2}{9}x$ and the canonical form of the Abel equation
(\ref{canonical}) is the same as the one for the Laguerre polynomials%
\[
w\left(  \frac{dw}{dx}-1\right)  =-\frac{2}{9}x,
\]
and therefore%
\[
S(z)=\frac{6Cz^{-2}+1\pm\sqrt{6Cz^{-2}+1}}{3C}z+2z-\frac{2}{z}.
\]
This behaves for $z\rightarrow\infty$ as
\[
\frac{1\pm1}{3C}z+2z,
\]
hence the required solution corresponds to the positive sign and $C=-\frac
{1}{3}$, giving
\[
S(z)=\left(  1-\sqrt{1-2z^{-2}}\right)  z=\frac{2}{z+\sqrt{z^{2}-2}}.
\]
This is the Stieltjes transform of a measure on $[-\sqrt{2},\sqrt{2}]$
\[
\frac{2}{z+\sqrt{z^{2}-2}}=\frac{1}{\pi}%
{\displaystyle\int\limits_{-\sqrt{2}}^{\sqrt{2}}}
\frac{\sqrt{2-t^{2}}}{z-t}\,dt,\quad z\notin\lbrack-\sqrt{2},\sqrt{2}],
\]
and consequently
\[
\lim_{n\rightarrow\infty}\frac{1}{n}\#\{\text{zeros of }H_{n}(\sqrt{n}x)\text{
in }[a,b]\}=\frac{1}{\pi}%
{\displaystyle\int\limits_{a}^{b}}
\sqrt{2-t^{2}}\,dt,\qquad\lbrack a,b]\subset\lbrack-\sqrt{2},\sqrt{2}].
\]
Again, this result is not new and corresponds to the famous semi-circle law
for the eigenvalues of random matrices.

\subsection{Bell polynomials \label{Bell}}

The Bell polynomials satisfy the equation \cite{MR1503161}%
\[
P_{n+1}(x)=xP_{n}^{\prime}(x)+xP_{n}(x).
\]
In \cite{MR2250392}, we obtained asymptotic approximations for these
polynomials using a discrete version of the ray method. Choosing $\phi(n)=n,$
we get%
\begin{equation}
a(z)=z,\quad b(z)=z. \label{abBell}%
\end{equation}
Using (\ref{abBell}) and $\sigma=1$ in (\ref{w-y}), (\ref{E}), (\ref{R}) and
(\ref{x-z}), we have%
\begin{equation}
w(z)=S(z)+1+\frac{1}{z},\quad E(z)=1,\quad R(z)=-\frac{1}{2z},\quad x=\frac
{2}{z}. \label{wBell}%
\end{equation}
Hence, $R(x)=-\frac{1}{4}x$ and the canonical form of the Abel equation
(\ref{canonical}) is%
\[
w\left(  \frac{dw}{dx}-1\right)  =-\frac{1}{4}x,
\]
and therefore%
\begin{equation}
w(x)=\frac{x}{2}\left[  1+\frac{1}{\mathrm{W}\left(  Cx\right)  }\right]
\label{wSol}%
\end{equation}
where $\mathrm{W}\left(  z\right)  $ is the Lambert W function
\cite{MR1414285} defined by%
\begin{equation}
z=\mathrm{W}\left(  z\right)  e^{\mathrm{W}\left(  z\right)  },\quad
\mathrm{W}\left(  0\right)  =0. \label{LW}%
\end{equation}
with \cite[4.13.2]{MR2723248}%
\[
\mathrm{W}\left(  -e^{-1}\right)  =-1,\quad\mathrm{W}\left(  e\right)  =1,
\]
and the differentiation property%
\[
\mathrm{W}^{\prime}\left(  z\right)  =\frac{e^{-\mathrm{W}\left(  z\right)  }%
}{\mathrm{W}\left(  z\right)  +1}.
\]

Using (\ref{wBell}) in (\ref{wSol}), we obtain%

\begin{equation}
S(z)=\left[  z\mathrm{W}\left(  \frac{2C}{z}\right)  \right]  ^{-1}-1.
\label{bellsol}%
\end{equation}
The Taylor series of the function $\mathrm{W}\left(  z\right)  $ around $0$ is
\cite{MR1809988}%
\begin{equation}
\mathrm{W}\left(  z\right)  =\sum\limits_{n=1}^{\infty}\left(  -n\right)
^{n-1}\frac{z^{n}}{n!},\quad\left\vert z\right\vert <e^{-1}. \label{WTaylor}%
\end{equation}
The function defined by this series can be extended to a holomorphic function
defined on all complex numbers with a branch cut along the interval
$(-\infty,-e^{-1}]$; this holomorphic function defines the principal branch of
the Lambert W function. Using (\ref{WTaylor}) in (\ref{bellsol}), we get%
\[
S(z)\sim\frac{1}{2C}-1+\frac{1}{z},\quad z\rightarrow\infty.
\]
Thus, we need $C=\frac{1}{2}$ and conclude that%
\[
S(z)=\left[  z\mathrm{W}\left(  \frac{1}{z}\right)  \right]  ^{-1}%
-1=e^{\mathrm{W}\left(  \frac{1}{z}\right)  }-1.
\]

Applying the Lagrange Inversion Formula \cite{MR2172781} to (\ref{LW}), we
have%
\[
e^{\mathrm{W}\left(  z\right)  }=\sum\limits_{n=0}^{\infty}\left(  1-n\right)
^{n-1}\frac{z^{n}}{n!},\quad\left\vert z\right\vert <e^{-1}.
\]
Hence,%
\[
S(z)=e^{\mathrm{W}\left(  \frac{1}{z}\right)  }-1=\sum\limits_{n=0}^{\infty
}\frac{\left(  -n\right)  ^{n}}{\left(  n+1\right)  !}\frac{1}{z^{n+1}}%
,\quad\left\vert z\right\vert >e.
\]
The function $S(z)$ has a branch cut along the interval $[-e,0].$

From (\ref{Inversion}), we have%
\[
S(z)=%
{\displaystyle\int\limits_{-e}^{0}}
\frac{d\psi(t)}{z-t},\quad z\notin\lbrack-e,0],
\]
with%
\[
\psi^{\prime}(t)=\frac{1}{\pi}\operatorname{Im}\exp\left[  \mathrm{W}\left(
\frac{1}{t}\right)  \right]  .
\]
Using the results in \cite{MR2959457}, we also have the integral
representation%
\[
S(z)=\frac{1}{\pi}%
{\displaystyle\int\limits_{0}^{\pi}}
\frac{v^{2}+\left[  1-v\cot\left(  v\right)  \right]  ^{2}}{v^{2}\csc
^{2}\left(  v\right)  \left[  z+v^{-1}\sin\left(  v\right)  e^{v\cot\left(
v\right)  }\right]  }dv.
\]

In \cite{MR1820893}, C. Elbert studied the zero asymptotics of $P_{n}(x)$, and
obtained
\begin{equation}
\psi\left(  t\right)  =1+\frac{1}{\pi}\left[  \operatorname{Im}\frac
{1}{\mathrm{W}\left(  \frac{1}{t}\right)  }-\arg\mathrm{W}\left(  \frac{1}%
{t}\right)  \right]  , \label{psiBell}%
\end{equation}
although he didn't identify the function appearing in his formulas with the
Lambert W function. His method was completely different, and was based on his
previous work on the asymptotic analysis of $P_{n}(x)$ using the saddle point
method \cite{MR1820892}.

In this case, we have
\[
\lim_{n\rightarrow\infty}\frac{1}{n}\#\{\text{zeros of }P_{n}(nx)\text{ in
}[a,b]\}=\frac{1}{\pi}%
{\displaystyle\int\limits_{a}^{b}}
\operatorname{Im}\exp\left[  \mathrm{W}\left(  \frac{1}{t}\right)  \right]
\,dt,\qquad\lbrack a,b]\subset\lbrack-e,0].
\]
In Figure \ref{bellfig} we plot the zero counting measure $\psi_{75}\left(  z\right)  $
defined in (\ref{psin}) and the measure $\psi\left(  t\right)  $ defined in
(\ref{psiBell}), to illustrate the accuracy of our results.

\begin{figure}[t]
\begin{center}
{\resizebox{17cm}{!}{\includegraphics{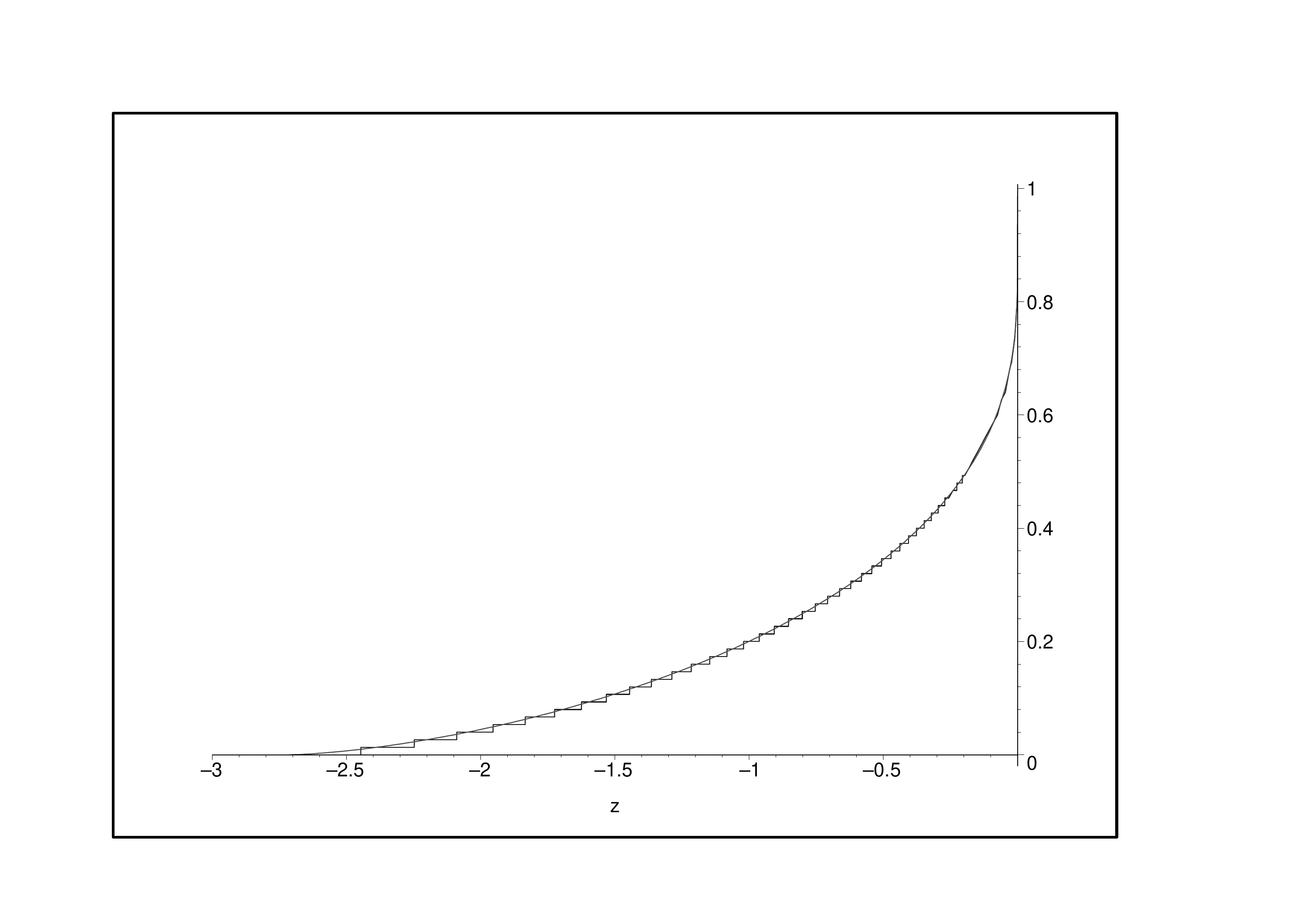}}}
\end{center}
\caption{A plot of $\psi_{75}$ (step function) and $\psi$ (solid line).}%
\label{bellfig}%
\end{figure}

\subsection{Inverse error function polynomials \label{Error}}

Let's consider the polynomials $P_{n}(x)$ defined by $P_{0}(x)=1$ and%
\[
P_{n+1}(x)=P_{n}^{\prime}(x)+\left(  n+1\right)  xP_{n}(x).
\]
The polynomials $P_{n}(x)$ arise in the computation of higher derivatives of
the inverse error function \cite{MR1986919}. Since they have purely imaginary
zeros, we set
\[
P_{n}(x)=n!\left(  \mathrm{i}\right)  ^{n}Q_{n}(-\mathrm{i}x),
\]
and obtain a family of monic polynomials with real zeros, satisfying%
\[
Q_{n+1}(x)=-\frac{1}{n+1}Q_{n}^{\prime}(x)+xQ_{n}(x).
\]
In this case, we can take $\phi(n)=1$ and get%

\begin{equation}
\lim_{n\rightarrow\infty}nA_{n}(x)=-1=a(x),\quad\lim_{n\rightarrow\infty}%
B_{n}(x)=x=b(x). \label{aberror}%
\end{equation}

Using (\ref{aberror}) and $\sigma=0$ in (\ref{solRicat}) gives%
\[
S(z)=z+\exp\left(  \frac{z^{2}}{2}\right)  \left[  \mathrm{i}\sqrt{\frac{\pi
}{2}}\mathrm{\operatorname{erf}}\left(  \frac{\mathrm{i}z}{\sqrt{2}}\right)
-C\right]  ^{-1}%
\]
where $\mathrm{\operatorname{erf}}\left(  \cdot\right)  $ is the error
function defined by \cite[7.2.1]{MR2723248}%
\[
\mathrm{\operatorname{erf}}\left(  z\right)  =\frac{2}{\sqrt{\pi}}%
\int\limits_{0}^{z}\exp\left(  -t^{2}\right)  dt
\]
with asymptotic behavior \cite[7.12.1]{MR2723248}%
\[
\mathrm{\operatorname{erf}}\left(  z\right)  \sim1-\frac{1}{\sqrt{\pi}}%
\exp\left(  -z^{2}\right)  \left(  \frac{1}{z}-\frac{1}{2z^{3}}\right)  ,\quad
z\rightarrow\infty.
\]
Hence, we have%
\[
\mathrm{i}\sqrt{\frac{\pi}{2}}\mathrm{\operatorname{erf}}\left(
\frac{\mathrm{i}z}{\sqrt{2}}\right)  \sim-\exp\left(  \frac{z^{2}}{2}\right)
\left(  \frac{1}{z}+\frac{1}{z^{3}}\right)  ,\quad z\rightarrow\infty.
\]
Along the imaginary axis, we get%
\[
S(\mathrm{i}z)\sim\mathrm{i}z+\exp\left(  -\frac{z^{2}}{2}\right)  \left[
\left(  -\mathrm{i}\sqrt{\frac{\pi}{2}}-C\right)  \right]  ^{-1},\quad
z\rightarrow\infty,
\]
and we conclude that we need to choose
\[
C=-\mathrm{i}\sqrt{\frac{\pi}{2}}.
\]
Thus,%
\[
S(z)=z-\mathrm{i}\sqrt{\frac{2}{\pi}}\exp\left(  \frac{z^{2}}{2}\right)
\left[  1+\mathrm{\operatorname{erf}}\left(  \frac{\mathrm{i}z}{\sqrt{2}%
}\right)  \right]  ^{-1}.
\]
From (\ref{req}), we have%
\[
S(z)=%
{\displaystyle\sum\limits_{n=1}^{\infty}}
\frac{c_{n}}{z^{n}},
\]
where the coefficients $c_{n}$ satisfy the recurrence%
\[
c_{n+2}=nc_{n}+%
{\displaystyle\sum\limits_{k=0}^{n-1}}
c_{k+1}c_{n-k},\quad n=1,2,\ldots,
\]
with $c_{1}=1$ and $c_{2}=0.$

Using the identity \cite[7.5.1]{MR2723248}
\[
\mathrm{\operatorname{erf}}\left(  \mathrm{i}z\right)  =\frac{2\mathrm{i}%
}{\sqrt{\pi}}\exp\left(  z^{2}\right)  \mathrm{daw}\left(  z\right)  ,
\]
where $\mathrm{daw}\left(  \cdot\right)  $ is Dawson's integral defined by
\cite[7.2.5]{MR2723248}%
\[
\mathrm{daw}\left(  z\right)  =\int\limits_{0}^{z}\exp\left(  t^{2}%
-z^{2}\right)  dt,
\]
we can write%
\begin{equation}
S(z)=z-\sqrt{\frac{2}{\pi}}\exp\left(  \frac{z^{2}}{2}\right)  \frac{\frac
{2}{\sqrt{\pi}}\exp\left(  \frac{z^{2}}{2}\right)  \mathrm{daw}\left(
\frac{z}{\sqrt{2}}\right)  +\mathrm{i}}{\frac{4}{\pi}\exp\left(  z^{2}\right)
\mathrm{daw}^{2}\left(  \frac{z}{\sqrt{2}}\right)  +1}. \label{SError}%
\end{equation}

From (\ref{Inversion}), we have%
\[
S(z)=%
{\displaystyle\int\limits_{-\infty}^{\infty}}
\frac{d\psi(t)}{z-t},\quad z\notin\mathbb{R},
\]
with%
\[
\psi^{\prime}(t)=\sqrt{\frac{2}{\pi}}\exp\left(  \frac{t^{2}}{2}\right)
\left[  4\exp\left(  t^{2}\right)  \mathrm{daw}^{2}\left(  \frac{t}{\sqrt{2}%
}\right)  +\pi\right]  ^{-1}.
\]
But since%
\[
\frac{d}{dt}\exp\left(  \frac{t^{2}}{2}\right)  \mathrm{daw}\left(  \frac
{t}{\sqrt{2}}\right)  =\frac{1}{\sqrt{2}}\exp\left(  \frac{t^{2}}{2}\right)
,
\]
we obtain%
\begin{equation}
\psi(t)=\frac{1}{\pi}\arctan\left[  \frac{2}{\sqrt{\pi}}\exp\left(
\frac{t^{2}}{2}\right)  \mathrm{daw}\left(  \frac{t}{\sqrt{2}}\right)
\right]  +\frac{1}{2}. \label{PsiError}%
\end{equation}
We conclude that%
\[
\lim_{n\rightarrow\infty}\frac{1}{n}\#\{\text{zeros of }Q_{n}(x)\text{ in
}[a,b]\}=\psi(b)-\psi(a).
\]
In Figure \ref{errorfig} , we plot the zero counting measure $\psi_{100}\left(  z\right)  $
defined in (\ref{psin}) and the measure $\psi\left(  t\right)  $ defined in
(\ref{PsiError}).

\begin{figure}[t]
\begin{center}
{\resizebox{17cm}{!}{\includegraphics{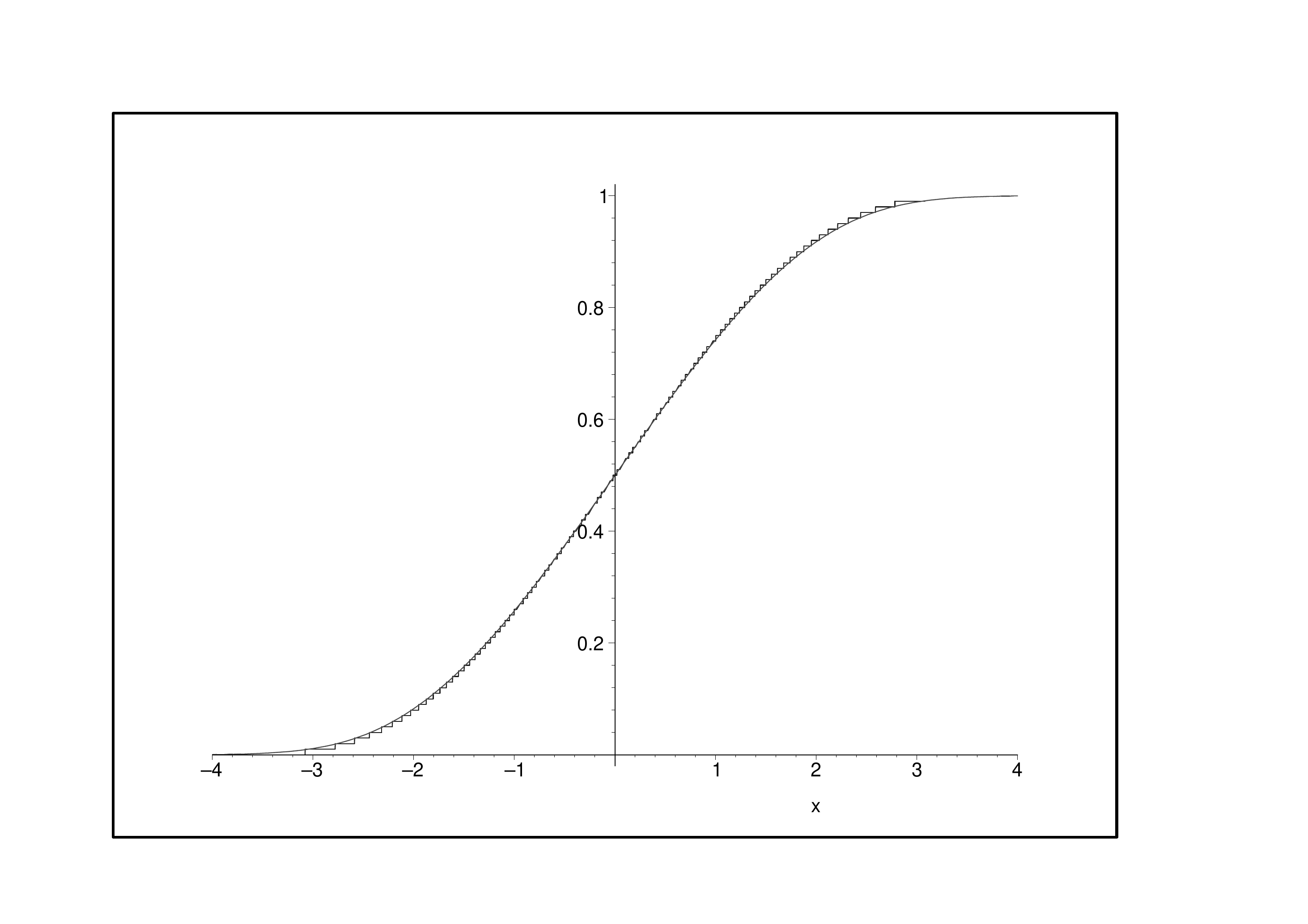}}}
\end{center}
\caption{A plot of $\psi_{100}$ (step function) and $\psi$ (solid line).}%
\label{errorfig}%
\end{figure}

In \cite{MR2966475}, we analyzed the polynomials $P_{n}(x)$ asymptotically
and, among others, we considered the limit $n\rightarrow\infty,$ with $x=y/n$
and $y=O(1).$ We obtained the asymptotic approximation%
\[
P_{n}(x)\sim n^{n}e^{-n}\sqrt{\frac{2n}{\ln(n)}}\left(  \sqrt{\frac{2}{\pi}%
}\right)  ^{n}\left[  \exp\left(  \sqrt{\frac{2}{\pi}}y\right)  +\left(
-1\right)  ^{n}\exp\left(  -\sqrt{\frac{2}{\pi}}y\right)  \right]  ,
\]
and therefore%
\[
Q_{n}(x)\sim\mathrm{i}^{n}\frac{1}{\sqrt{\pi\ln(n)}}\left(  \sqrt{\frac{2}%
{\pi}}\right)  ^{n}\left[  \exp\left(  -\sqrt{\frac{2}{\pi}}nx\mathrm{i}%
\right)  +\left(  -1\right)  ^{n}\exp\left(  \sqrt{\frac{2}{\pi}}%
nx\mathrm{i}\right)  \right]  ,
\]
where we have used Stirling's formula \cite[5.11.7]{MR2723248}%
\[
n!\sim\sqrt{2\pi n}n^{n}e^{-n},\quad n\rightarrow\infty.
\]
Thus, we get%
\[
Q_{n}(x)\sim\left\{
\begin{array}
[c]{c}%
2\left(  -\frac{2}{\pi}\right)  ^{\frac{n}{2}}\left[  \pi\ln(n)\right]
^{-\frac{1}{2}}\cos\left(  \sqrt{\frac{2}{\pi}}nx\right)  ,\quad n\text{ \ is
even}\\
-\sqrt{2}\left(  -\frac{2}{\pi}\right)  ^{\frac{n+1}{2}}\left[  \pi
\ln(n)\right]  ^{-\frac{1}{2}}\sin\left(  \sqrt{\frac{2}{\pi}}nx\right)
,\quad n\text{ \ is odd}%
\end{array}
\right.  .
\]
It follows that $x_{k,n}$ (the zeros of $Q_{n}),$ are approximated
asymptotically by%
\[
x_{k,n}\sim\left\{
\begin{array}
[c]{c}%
\frac{\pi^{\frac{3}{2}}}{\sqrt{2}}\frac{k}{n}+\left(  \frac{\pi}{2}\right)
^{\frac{3}{2}}\frac{1}{n},\quad n\text{ \ is even}\\
\frac{\pi^{\frac{3}{2}}}{\sqrt{2}}\frac{k}{n},\quad n\text{ \ is odd}%
\end{array}
\right.  ,
\]
with $k=0,\pm1,\pm2,\ldots.$ Hence, the zero counting measure (\ref{psin}) can
be approximated by%
\begin{equation}
d\psi_{n}\left(  t\right)  \sim\frac{\sqrt{2}}{\pi^{\frac{3}{2}}}dt,\quad
n\rightarrow\infty. \label{psinError}%
\end{equation}
But from (\ref{PsiError}), we have%
\[
\psi(t)\sim\frac{1}{2}+\frac{\sqrt{2}}{\pi^{\frac{3}{2}}}t,\quad
t\rightarrow0,
\]
in agreement with (\ref{psinError}).

\section{Conclusion}

In this paper we have investigated the asymptotic zero distribution of a
family of polynomials satisfying a differential-difference equation of the
form
\[
P_{n+1}(x)=A_{n}(x)P_{n}^{\prime}(x)+B_{n}(x)P_{n}(x), \qquad n\geq0,
\]
where $A_{n}$ are polynomials of degree at most $2$ and $B_{n}$ are
polynomials of degree at most $1$. We have shown that, assuming the zeros of
the polynomials interlace and after appropriate scaling using some regularly
varying function $\phi(n)$, the Stieltjes transform of the asymptotic zero
distribution satisfies a differential equation of Riccati or Abel type, which
can be solved explicitly. We have illustrated this result for the classical
orthogonal polynomials of Jacobi, Laguerre, and Hermite, for which the
asymptotic zero distribution is already well known, but also for two families
of polynomials which are not orthogonal polynomials: the Bell polynomials and
polynomials related to the inverse error function. One of the main ingredients
in this paper is Theorem \ref{thm2} which shows that the sequence of zero
counting measures with regularly varying scaling is relatively compact in the
Skorohod metric on $D[0,1]$.

\end{document}